\def\R{\mathbb{R}}
\def\Z{\mathbb{Z}}
\def\F{\mathbb{F}}
\renewcommand{\d}{\text{\rm d}}
\def\sgn {{\rm sgn}}
\newtheorem{theorem}{Theorem}
\newtheorem{question}[theorem]{Question}
\newtheorem*{definition*}{Definition}
\newtheorem{lemma}[theorem]{Lemma}
\DeclareFontFamily{U}{tipa}{}
\DeclareFontShape{U}{tipa}{m}{n}{<->tipa10}{}
\newcommand{\arc@char}{{\usefont{U}{tipa}{m}{n}\symbol{62}}}%
\numberwithin{equation}{section}
\newcommand{\intav}[1]{\mathchoice {\mathop{\vrule width 6pt height 3 pt depth  -2.5pt
\kern -8pt \intop}\nolimits_{\kern -6pt#1}} {\mathop{\vrule width
5pt height 3  pt depth -2.6pt \kern -6pt \intop}\nolimits_{#1}}
{\mathop{\vrule width 5pt height 3 pt depth -2.6pt \kern -6pt
\intop}\nolimits_{#1}} {\mathop{\vrule width 5pt height 3 pt depth
-2.6pt \kern -6pt \intop}\nolimits_{#1}}}
\newcommand{\intavl}[1]{\mathchoice {\mathop{\vrule width 6pt height 3 pt depth  -2.5pt
\kern -8pt \intop}\limits_{\kern -6pt#1}} {\mathop{\vrule width 5pt
height 3  pt depth -2.6pt \kern -6pt \intop}\nolimits_{#1}}
{\mathop{\vrule width 5pt height 3 pt depth -2.6pt \kern -6pt
\intop}\nolimits_{#1}} {\mathop{\vrule width 5pt height 3 pt depth
-2.6pt \kern -6pt \intop}\nolimits_{#1}}}
\title[Littlewood's estimates]{Littlewood's estimates for $L$-functions \\ in the hyperelliptic ensemble}
\author[Carneiro]{Emanuel Carneiro}
\address[Emanuel Carneiro]{ICTP - The Abdus Salam International Centre for Theoretical Physics, 
Strada Costiera, 11, I - 34151, Trieste, Italy.}
\email{carneiro@ictp.it}
\author[Darbar]{Pranendu Darbar}
\address[Pranendu Darbar]{School of Mathematics and Statistics, University of New South Wales, Sydney NSW 2052, Australia.}
\email{darbarpranendu100@gmail.com}
\author[Das]{Mithun Kumar Das}
\address[Mithun Kumar Das]{ICTP - The Abdus Salam International Centre for Theoretical Physics, Strada Costiera, 11, I - 34151, Trieste, Italy; and School of Mathematical Sciences, National Institute of Science Education and Research, A CI of Homi Bhabha National Institute, Jatni, Khurda, 752050, India.}
\email{mdas@ictp.it}
\author[Ismoilov]{Tolibjon Ismoilov}
\address[Tolibjon Ismoilov]{SISSA - Scuola Internazionale Superiore di Studi Avanzati, Via Bonomea 265, 34136 Trieste, Italy.}
\email{tolibjon.ismoilov@sissa.it}
\author[Ramos]{Antonio Pedro Ramos}
\address[Antonio Pedro Ramos]{IMPA - Instituto de Matem\'{a}tica Pura e Aplicada, Estrada Dona Castorina, 110, CEP 22460-320, Rio de Janeiro, RJ - Brazil.}
\email{antonio.ramos@impa.br}
\date{\today}                                           
\begin{document}

\subjclass[2020]{11M06, 11M26, 11G20, 42A10}
\keywords{Riemann zeta-function, argument function, quadratic $L$-functions, function fields, hyperelliptic ensemble, extremal trigonometric polynomials.}
\begin{abstract} We investigate the analogues of certain classical estimates of Littlewood for the Riemann zeta-function in the context of quadratic Dirichlet $L$-functions over function fields. In some situations, we are actually able to establish finer results in the function field setup than what is currently known in the original number field setup, and this leads us to an educated guess on what could happen for the Riemann zeta-function in such situations. Fourier analysis techniques play an important role in our approach. 
\end{abstract}

\maketitle

\section{Introduction}

\subsection{Background} Let $\zeta(s)$ denote the Riemann zeta-function and, as usual, let $N(t)$ be the number of non-trivial zeros $\rho = \beta + i \gamma$ of $\zeta(s)$ with ordinates $\gamma$ in the interval $(0,t]$ (zeros with ordinate $t$ are counted with weight $\tfrac12$). For $t \geq 2$, one has
$$N(t) = \frac{t}{2\pi}\log \frac{t}{2\pi} - \frac{t}{2\pi} + \frac{7}{8} + S(t) + O\left(\frac{1}{t}\right).$$
The argument function $S(t)$ that appears in this classical formula is defined by $S(t) := \frac{1}{\pi} \arg \zeta(\tfrac12 + it)$, if $t$ is not the ordinate of a zero of $\zeta(s)$, where the argument is obtained by continuous variation along the line segments joining the points $2$, $2 + it$, and $\tfrac 12 + it$, with the convention that $\arg \zeta(2) = 0$. If $t$ is the ordinate of a zero of $\zeta(s)$, we set $S(t) := \tfrac12 \lim_{\varepsilon \to 0} \big(S(t - \varepsilon) + S(t + \varepsilon)\big)$. Useful information on the oscillatory behavior of $S(t)$ is encoded in its antiderivatives. It is convenient to consider a specific sequence of antiderivatives by setting $S_0(t) = S(t)$ and, for $n\geq 1$ and $t>0$, 
\begin{equation*}
S_n(t) := \int_0^t S_{n-1}(\tau) \,\d \tau + \delta_n\,,
\end{equation*} 
where, at each step, the constant $\delta_n$ is suitably chosen in alignment with the expected cancellation. These are explicitly given by $\delta_1 = 0$, $\delta_{2k} = \frac{(-1)^{k-1}}{(2k)! \cdot 2^{2k}}$, and  
$$\delta_{2k-1} =\frac{ (-1)^{k-1}}{\pi} \int_{\tfrac{1}{2}}^{\infty} \int_{\sigma_{2k-2}}^{\infty} \ldots \int_{\sigma_{2}}^{\infty} \int_{\sigma_{1}}^{\infty} \log |\zeta(\sigma_0)|\, \d\sigma_0\,\d\sigma_1\,\ldots \,\d \sigma_{2k-2} $$
for $k \geq 2$. For more details on these particular constants, see \cite{CC, F1, L}.

\smallskip

Assuming the Riemann hypothesis (RH),  Littlewood \cite{L} in 1924 proved that
\begin{equation}\label{20250903_14:42}
\log \big|\zeta(\tfrac12 + it)\big| \ll \frac{\log t}{\log \log t} \ \ {\rm and} \ \ |S_n(t)| \ll \frac{\log t}{(\log \log t)^{n+1}}.
\end{equation}
The order of magnitude of such upper bounds has never been improved over the past century, and the efforts have hence been concentrated in optimizing the values of the implicit constants. The current best versions of such estimates (all for sufficiently large $t$) are
\begin{equation}\label{20250903_14:28}
\log \big|\zeta(\tfrac12 + it)\big| \leq \left( \frac{\log 2}{2} + o(1) \right)  \frac{\log t}{\log \log t}\,,
\end{equation} 
due to Chandee and Soundararajan \cite{CS},
\begin{equation}\label{20250903_14:29}
|S(t)| \leq \left( \frac{1}{4} + o(1) \right)  \frac{\log t}{\log \log t}\,,
\end{equation} 
due to Carneiro, Chandee and Milinovich \cite{CCM, CCM2} and, more generally, for $n \geq 1$, 
\begin{equation}\label{20250903_14:30}
-\left( C^-_n + o(1)\right) \frac{\log t}{(\log \log t)^{n+1}} \ \leq \ S_n(t) \ \leq \ \left( C^+_n + o(1)\right) \frac{\log t}{(\log \log t)^{n+1}}\,,
\end{equation}
where, for $n = 4k \pm 1$,
\begin{equation}\label{20250911_17:58}
C_{n}^{\mp} = \frac{\zeta(n+1)}{\pi \cdot 2^{n+1}} \ \ \ \ {\rm and} \ \ \ \ C_{n}^{\pm} = \frac{\left( 1 - 2^{-n}\right)\zeta(n+1)}{\pi \cdot 2^{n+1}}\,,
\end{equation}
and, for $n \geq 2$ even,
\begin{align}\label{20250911_23:59}
\begin{split}
\!\!\!\!\!\!C_n^+ \!= C_n^-  \!=  \! \left[\frac{2 \big(C_{n+1}^{+} + C_{n+1}^{-}\big)  C_{n-1}^{+} C_{n-1}^{-}}{C_{n-1}^{+} + C_{n-1}^{-}}\right]^{1/2} \! \!\!\! \! \!= \frac{\sqrt{2}}{\pi  \, 2^{n+1}}\!\! \left[\frac{\!\left(1 \!- \!2^{-n-2}\right)\!\left( 1 \!- \! 2^{-n+1}\right)\zeta(n)  \zeta(n+2)}{\left(1 \!- \!2^{-n}\right)}\right]^{1/2}\!\!\!\!\!\!.
\end{split}
\end{align}
The case $n=1$ of \eqref{20250903_14:30} is due to Carneiro, Chandee and Milinovich \cite{CCM}, while the cases $n\geq 2$ in \eqref{20250903_14:30} are due to Carneiro and Chirre \cite{CC}. 

\smallskip

Estimates \eqref{20250903_14:28}, \eqref{20250903_14:29} and \eqref{20250903_14:30} are the ones we want to discuss in a function field setup. 

\smallskip

\noindent {\sc Remark}: Observe that, in light of Littlewood's bounds \eqref{20250903_14:42} under RH, the choice of constants $\delta_n$ is the only one that makes $S_n(t)$ have mean-value zero for all $n\geq 0$, that is 
\begin{equation}\label{20250905_14:51}
\lim_{t\to \infty} \frac{1}{t} \int_0^t S_n(\tau) \, \d \tau = 0.
\end{equation}

\subsection{Quadratic $L$-functions in the hyperelliptic ensemble} We now move our discussion to the setup of $L$-functions over function fields, a topic of interest in modern research in number theory; see, e.g., \cite{AGK, BF, BF2, FR, Flo1, R1}. A general reference for the material in \S \ref{Setup_sub} is \cite{Ros}.

\subsubsection{Setup} \label{Setup_sub} Start by fixing a finite field $\mathbb{F}_{q}$ of odd cardinality, and let $\mathbb{F}_{q}[x]$ be the polynomial ring over $\mathbb{F}_{q}$ in the variable $x$. For $f  \in \mathbb{F}_{q}[x]$ its degree is denoted by $\deg(f)$. If $f$ is a non-zero polynomial in $\mathbb{F}_{q}[x]$ then the norm of $f$ is defined as $|f|:=q^{\deg(f)}$. If $f=0$, we set $|f|=0$. A monic irreducible polynomial $P$, of degree at least $1$, is called a {\it prime polynomial} or simply a {\it prime}. If $P$ is a prime polynomial, the quadratic residue symbol $\big( \frac{f}{P}\big)$ is defined by
\begin{equation*}
\left( \frac{f}{P}\right) = 
\begin{cases}
0, & {\rm if} \ P\mid f;\\
1, & {\rm if} \ P \nmid f \ {\rm and} \ f \ {\rm is \ a \ square \ modulo}\ P;\\
-1, & {\rm if} \ P \nmid f \ {\rm and} \ f \ {\rm is \ not \ a \ square \ modulo}\ P.
\end{cases}
\end{equation*}
If $D \in \mathbb{F}_{q}[x]$ has prime factorization $D = P_1^{a_1}\cdots P_k^{a_k}$, we set $\left( \frac{f}{D}\right) := \prod_{i=1}^k \left( \frac{f}{P_i}\right)^{a_i}$. The quadratic reciprocity law states that for $A, B$ non-zero, coprime monic polynomials, 
\begin{equation*}
\left( \frac{A}{B}\right) = \left( \frac{B}{A}\right) (-1)^{\frac{(q-1)}{2} \deg (A) \deg(B)}.
\end{equation*}

For $d = 2g +1$ an odd natural number, we define the {\it hyperelltiptic ensemble} $\mathcal{H}_{d}$ as
\begin{equation}\label{20250908_16:48}
\mathcal{H}_d=\left\{ D\in \mathbb{F}_{q}[x] : \, D \text{ is monic, square-free, and} \, \deg(D)=d \right\}.
\end{equation}
For each $D \in \mathcal{H}_{d}$, one can consider the quadratic Dirichlet character $\chi_D$ defined by $\chi_D(f) = \big( \frac{D}{f}\big)$, and the corresponding $L$-function 
\begin{align*}
L(s,\chi_{D})=\sum_{f \ {\rm monic}} \frac{\chi_{D}(f)}{|f|^{s}}=\prod_{P \ {\rm prime}}\left(1-\chi_{D}(P)\,|P|^{-s} \right)^{-1},\;\; \operatorname{Re}(s)>1.
\end{align*} 
It is convenient in this setup to sometimes work with an equivalent function written in terms of the variable  $u=q^{-s}$, namely,
\begin{align}\label{20250908_16:49}
\mathcal{L}(u,\chi_{D})=\sum_{f\ {\rm monic}} \chi_{D}(f)\, u^{\deg(f)}=\prod_{P\ {\rm prime}}\left(1-\chi_{D}(P)\,u^{\deg(P)} \right)^{-1},\;\;\, |u|<\frac{1}{q} .
\end{align}

A crucial fact here is that the function $\mathcal{L}(u,\chi_{D})$ turns out to be  a polynomial of degree $2g$; see \cite[Proposition 4.3]{Ros}. Moreover, from the classical work of Weil  \cite{WEIL} on the Riemann hypothesis for curves over finite fields (which, in our context, boils down to the fact that for each $D \in \mathcal{H}_{d}$ there is an associated hyperelliptic curve $C_D : y^2 = D(x)$, non-singular and of genus $g$), it follows that $\mathcal{L}(u,\chi_{D})$ satisfies the functional equation 
\begin{align}\label{20250910_15:16}
\mathcal{L}(u,\chi_{D}) = (qu^2)^g \,\mathcal{L}\left(\frac{1}{qu},\chi_{D}\right),
\end{align}
and verifies the Riemann hypothesis, namely, all of its zeros lie in the circle $|u| = q^{-1/2}$; see, e.g., \cite[\S 2]{AGK}. Letting $e(x) := e^{2 \pi i x}$, we then write
\begin{align}\label{20250908_17:40}
\mathcal{L}(u,\chi_{D})= \prod_{j=1}^{2g} \left(1 -  \frac{u}{u_j}\right)\,,
\end{align}
where $u_j := q^{-1/2} e(\theta_j)$ are the zeros of $\mathcal{L}(u,\chi_{D})$. From the functional equation, note that the complex zeros appear in conjugate pairs, and hence $\mathcal{L}(u,\chi_{D})$ is real-valued for $u \in \R$.

\smallskip

We want to look into analogues of \eqref{20250903_14:28}, \eqref{20250903_14:29} and \eqref{20250903_14:30} as $d \to \infty$. With the change of variables, the critical line $s = \tfrac12 + it$ becomes the critical circle $u = q^{-1/2} e(\theta)$, for $\theta \in \R/\Z$. Note that, since the Riemman hypothesis is true for $\mathcal{L}(u,\chi_{D})$, the estimates below, differently than  \eqref{20250903_14:28}, \eqref{20250903_14:29} and \eqref{20250903_14:30}, are unconditional.

\subsubsection{The modulus on the critical circle} The analogue of \eqref{20250903_14:28} is the following estimate.
\begin{theorem} \label{Thm1}
For  $d = 2g +1$, $D \in \mathcal{H}_d$ and $\theta  \in \R/\Z$, we have, as $d \to \infty$,
\begin{align}\label{20250904_16:17}
\log\big|\mathcal{L}\big(q^{-1/2} e(\theta),\chi_{D}\big)\big| \leq \left( \frac{\log 2}{2} + o(1) \right)  \frac{d}{\log_q d}.
\end{align}
\end{theorem}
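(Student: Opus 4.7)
The plan is to adapt the Fourier-analytic approach of Chandee and Soundararajan \cite{CS} to the function field setting, exploiting the finite factorization of $\mathcal{L}(u,\chi_D)$ coming from Weil's theorem. Writing $u_j = q^{-1/2}e(\theta_j)$ for the zeros and using $|1-e(\phi)|=2|\sin(\pi\phi)|$, the factorization \eqref{20250908_17:40} yields the clean identity
\begin{equation*}
\log\bigl|\mathcal{L}\bigl(q^{-1/2}e(\theta),\chi_D\bigr)\bigr| \;=\; \sum_{j=1}^{2g} \log\bigl|2\sin(\pi(\theta-\theta_j))\bigr|,
\end{equation*}
which reduces the problem to a trigonometric sum over the spectral angles.

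The main idea is then to introduce a real-valued trigonometric polynomial majorant $M_N(\phi) = \sum_{|k|\leq N}\widehat{M}_N(k)\,e(k\phi)$ of degree at most $N$ satisfying $M_N(\phi) \geq \log|2\sin(\pi\phi)|$ for every $\phi\in\R/\Z$, and to unfold the resulting trace sum via an explicit formula. Comparing the two power-series expansions of $\log \mathcal{L}(u,\chi_D)$ — one obtained from the zero product \eqref{20250908_17:40}, the other from the Euler product \eqref{20250908_16:49} — one finds, for each integer $k\geq 1$,
\begin{equation*}
\sum_{j=1}^{2g} e(-k\theta_j) \;=\; -\,q^{-k/2}\!\!\sum_{\substack{f \text{ monic} \\ \deg(f)=k}} \Lambda(f)\,\chi_D(f),
\end{equation*}
where $\Lambda$ denotes the function-field von Mangoldt function. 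Combining this with the trivial bound $\big|\sum_{\deg(f)=k}\Lambda(f)\chi_D(f)\big|\leq q^{k}$ (which follows from the prime polynomial theorem $\sum_{\deg(f)=k}\Lambda(f) = q^{k}$) and the majorization inequality, one is led to
\begin{equation*}
\log\bigl|\mathcal{L}\bigl(q^{-1/2}e(\theta),\chi_D\bigr)\bigr| \;\leq\; 2g\,\widehat{M}_N(0) \,+\, 2\sum_{k=1}^{N} |\widehat{M}_N(k)|\,q^{k/2}.
\end{equation*}

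It then remains to pick $N$ and the extremal majorant $M_N$ optimally. A natural choice is $N = \lfloor \log_q d - c\log_q\log_q d\rfloor$ for a suitable constant $c>0$, so that $q^{N/2}$ is smaller than $d/\log_q d$ by a factor polynomial in $\log_q d$; provided the Fourier coefficients $|\widehat{M}_N(k)|$ stay bounded by $O(1/N)$, the second sum above is negligible. With this in place, the dominant contribution is $2g\,\widehat{M}_N(0)$, and the theorem reduces to the sharp extremal inequality
\begin{equation*}
\widehat{M}_N(0) \;\leq\; \frac{\log 2}{2N} \,+\, o\!\left(\frac{1}{N}\right)
\end{equation*}
among trigonometric polynomial majorants of $\log|2\sin(\pi\phi)|$ of degree at most $N$. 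The main obstacle of the proof lies precisely here: constructing a (near-)extremal trigonometric polynomial majorant that realizes the sharp constant $\log 2/2$ while keeping the higher-frequency Fourier coefficients under control. This is the circle-group incarnation of the Beurling--Selberg extremal problem that drives \cite{CS}, and is expected to be produced by periodizing a suitably chosen Beurling majorant for $\log|x|$ on the real line, together with a verification that the resulting Fourier data meet the two competing requirements simultaneously.
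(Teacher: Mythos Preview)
Your overall strategy coincides with the paper's: express the log-modulus as $\sum_{j}\log|2\sin\pi(\theta-\theta_j)|$ (Lemma~\ref{Lem4_modulus}), replace $\log|2\sin(\pi\phi)|$ by a degree-$N$ trigonometric majorant, and bound the nonzero frequencies using the explicit formula together with $\big|\sum_j e(k\theta_j)\big|\le q^{|k|/2}$.

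There is, however, a concrete quantitative error in your endgame. The sharp extremal majorant of $\varphi(\phi)=\log|2\sin(\pi\phi)|$ of degree at most $N$ satisfies
\[
\widehat{M}_N(0)=\int_{\R/\Z}M_N(\phi)\,\d\phi=\frac{\log 2}{N+1},
\]
not $\frac{\log 2}{2N}+o(1/N)$ as you assert. This is precisely the Carneiro--Vaaler result quoted as Lemma~\ref{Lem6_maj_varphi}, so the ``main obstacle'' you identify is already resolved in the literature; but the constant is twice what you expect. With your choice $N\sim\log_q d$, the main term $2g\,\widehat{M}_N(0)$ would therefore be $\sim(\log 2)\,d/\log_q d$, missing the target by a factor of $2$.

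The factor $\tfrac12$ in \eqref{20250904_16:17} does not come from the extremal problem; it comes from the choice of $N$. Because the trace bound is $q^{|k|/2}$ rather than $q^{|k|}$, one can push the degree all the way to $N\sim 2\log_q d - c\log_q\log_q d$ while still keeping $\sum_{k\ne 0}|\widehat{M}_N(k)|\,q^{|k|/2}=O\big(Nq^{N/2}\big)=o(d/\log_q d)$. With this larger $N$ one obtains
\[
2g\,\widehat{M}_N(0)\;\sim\;2g\cdot\frac{\log 2}{2\log_q d}\;=\;\Big(\frac{\log 2}{2}+o(1)\Big)\frac{d}{\log_q d}.
\]
Your choice $N\sim\log_q d$ uses only half of the available Fourier support and cannot yield the sharp constant.
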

This result is not new. It was independently obtained by Florea \cite[Corollary 8.2]{Flo1} and by Bucur, Costa, David, Guerreiro and Lowry-Duda \cite[Theorem 20]{BCDGL}, with different proofs. For the convenience of the reader we provide a short proof within our framework (that is similar in spirit to the proof given in \cite[Theorem 20]{BCDGL}, only slightly different). We note that a previous result of Altu\u{g} and Tsimerman \cite{AS} had \eqref{20250904_16:17} with the constant $(\log 2)/ 2$ replaced by $1$.

\subsubsection{The argument on the critical circle} Following the notation of Andrade, Gonek and Keating \cite[\S 4]{AGK}, when $q^{-1/2}e(\theta)$ is not a zero of $\mathcal{L}(u,\chi_{D})$ we define 
\begin{align}\label{20250911_14:12}
S(\theta, \chi_{D}) := - \frac{1}{\pi} \, \Delta_{\Gamma_{\theta}}\arg \mathcal{L}(u,\chi_{D})\,,
\end{align}
where the path $\Gamma_{\theta}$ consists of the positively oriented circular arc from $q^{-2}$ to $q^{-2}e(\theta)$ followed by the radial segment from $q^{-2}e(\theta)$ to $q^{-1/2}e(\theta)$, and $\Delta_{\Gamma_{\theta}}\arg$ denotes the variation of the argument along this path. If $q^{-1/2}e(\theta)$ is a zero of $\mathcal{L}(u,\chi_{D})$, set $S(\theta, \chi_{D}) := \tfrac12 \lim_{\varepsilon \to 0} \big(S(\theta - \varepsilon, \chi_{D}) + S(\theta + \varepsilon, \chi_{D}) \big)$. By the argument principle, note that this is indeed a $1$-periodic function on $\theta$. The estimate 
\begin{align*}
S(\theta, \chi_{D}) \ll \frac{d}{\log_q d}
\end{align*}
was established in \cite[Theorem 9]{AGK}, and a careful reading of their proof yields 
\begin{align}\label{20250911_14:20}
S(\theta, \chi_{D}) \leq \left( \frac{1}{2} + o(1) \right) \frac{d}{\log_q d}.
\end{align}
We refine this estimate to obtain the analogue of \eqref{20250903_14:29}.
\begin{theorem} \label{Thm2}
For  $d = 2g +1$, $D \in \mathcal{H}_d$ and $\theta  \in \R/\Z$, we have, as $d \to \infty$,
\begin{align*}
S(\theta, \chi_{D}) \leq \left( \frac{1}{4} + o(1) \right) \frac{d}{\log_q d}.
\end{align*}
\end{theorem}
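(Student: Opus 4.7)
I would adapt to the function field setting the Fourier-analytic technique used by Carneiro, Chandee and Milinovich \cite{CCM} to obtain the constant $\tfrac14$ for $S(t)$ in the Riemann case. The backbone consists of an explicit formula relating a trigonometric sum over the zeros $\theta_j$ of $\mathcal L(u,\chi_D)$ to a short character sum over prime polynomials, combined with a Selberg--Vaaler extremal minorant of the Bernoulli sawtooth of carefully chosen degree.

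Equating the $u^n$-coefficients in $\log \mathcal L(u,\chi_D)$ computed via the Euler product in \eqref{20250908_16:49} and via the factorization \eqref{20250908_17:40}, one obtains
\begin{equation*}
\sum_{j=1}^{2g} e(-n\theta_j) \;=\; -q^{-n/2}\,\Lambda_D(n), \qquad \Lambda_D(n) := \!\!\!\!\sum_{\substack{f\in \F_q[x]\ \rm monic\\ \deg f = n}}\!\!\!\!\! \Lambda_q(f)\,\chi_D(f), \qquad n\geq 1,
\end{equation*}
where $\Lambda_q$ is the von Mangoldt function on $\F_q[x]$ (the analogous identity for $n\leq -1$ follows by the symmetry $\{\theta_j\}=\{-\theta_j\}$ of the zeros). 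Hence, for any real trigonometric polynomial $m$ on $\R/\Z$ of degree $\leq N$,
\begin{equation*}
\sum_{j=1}^{2g} m(\theta-\theta_j) \;=\; 2g\,\widehat m(0) \;-\; \sum_{0<|n|\leq N} \widehat m(n)\,e(n\theta)\,q^{-|n|/2}\,\Lambda_D(|n|).
\end{equation*}
A separate computation using $\arg\bigl(1-e(\alpha)\bigr) = \pi(\{\alpha\}-\tfrac12)$ for $\alpha\notin\Z$, and tracking the argument variation along $\Gamma_\theta$, produces
\begin{equation*}
\pi\,S(\theta,\chi_D) \;=\; -\sum_{j=1}^{2g} \Psi(\theta-\theta_j) + O(1), \qquad \Psi(\alpha) := \pi\bigl(\{\alpha\}-\tfrac12\bigr),
\end{equation*}
where the $O(1)$ absorbs the bounded contribution from the arc at $|u|=q^{-2}$.

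I would then take $N := \lfloor 2\log_q d\rfloor$ and let $m = m^-$ be the optimal trigonometric minorant of $\Psi$ on $\R/\Z$ of degree $\leq N$, namely the Selberg--Vaaler polynomial for the sawtooth, for which $\widehat{m^-}(0) = -\tfrac{\pi}{2(N+1)} + O(N^{-2})$ and $|\widehat{m^-}(n)| \leq \tfrac{1}{2|n|} + O(1/N)$. Combining the pointwise inequality $\Psi \geq m^-$ with the explicit formula gives
\begin{equation*}
\pi\,S(\theta,\chi_D) \;\leq\; -2g\,\widehat{m^-}(0) + \Big|\sum_{0<|n|\leq N} \widehat{m^-}(n)\,e(n\theta)\,q^{-|n|/2}\,\Lambda_D(|n|)\Big| + O(1).
\end{equation*}
The first term equals $\tfrac{\pi g}{N+1}(1+o(1)) = \tfrac{\pi d}{4\log_q d}(1+o(1))$, producing the target constant $\tfrac14$. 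For the prime sum, since $N\leq 2\log_q d$ we may use the trivial Prime Polynomial Theorem bound $|\Lambda_D(n)|\leq q^n/n + O(q^{n/2})$ (valid uniformly in $D$) throughout the range, obtaining
\begin{equation*}
\Big|\sum_{0<|n|\leq N} \widehat{m^-}(n)\,e(n\theta)\,q^{-|n|/2}\,\Lambda_D(|n|)\Big| \;\ll\; \sum_{n=1}^N \frac{q^{n/2}}{n^2} \;\ll\; \frac{q^{N/2}}{N^2} \;\ll\; \frac{d}{(\log_q d)^2},
\end{equation*}
which is $o(d/\log_q d)$. Dividing through by $\pi$ yields the theorem.

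The hard part, and the only genuine improvement over \eqref{20250911_14:20}, is the production of the minorant $m^-$ with the sharp leading coefficient $-\tfrac{\pi}{2(N+1)}$ together with Fourier decay on the non-zero modes good enough to absorb the prime sum into $o(d/\log_q d)$. One expects to construct $m^-$ by periodizing Vaaler's classical real-line extremal minorant of the sign function, and the factor $\tfrac14$ in the final bound reflects the amplitude $\pi$ of the jump of $\Psi$ (rather than $2\pi$), in direct analogy with the Carneiro--Chandee--Milinovich refinement \cite{CCM}. Handling the $O(1)$ boundary contribution coming from the arc of $\Gamma_\theta$ and verifying the precise Fourier estimates for $\widehat{m^-}$ are the main technical points to be addressed.
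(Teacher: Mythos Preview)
Your overall strategy is the paper's first (unified) proof specialised to $n=0$: write $S(\theta,\chi_D)=-\sum_j \mathcal{B}_1(\theta-\theta_j)$ (the paper's Lemma~\ref{Lem:Bernoulli_rep} gives this \emph{exactly}, so your $O(1)$ from the arc at $|u|=q^{-2}$ is not needed), replace the sawtooth by a degree-$N$ Selberg--Vaaler extremal minorant, and absorb the non-zero Fourier modes using the explicit formula together with the Prime Polynomial Theorem. The paper also records a second, independent proof via the zero-counting function on a symmetric interval, but your route matches the first one.

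There is, however, a concrete slip in the error-term analysis that causes the argument as written to fail. The trivial bound for the von Mangoldt--weighted sum is
\[
|\Lambda_D(n)| \;\leq\; \sum_{\substack{f\ \mathrm{monic}\\ \deg f = n}} \Lambda_q(f) \;=\; q^n
\]
exactly (this is \eqref{20250910_13:31}); you have confused it with the count $\pi_q(n)\sim q^n/n$ of monic irreducibles and thereby gained an illegitimate extra factor of $1/n$. With the correct bound and your Fourier decay $|\widehat{m^-}(n)|\ll 1/|n|$, the tail is
\[
\sum_{0<|n|\leq N} \frac{1}{|n|}\,q^{-|n|/2}\cdot q^{|n|} \;\asymp\; \frac{q^{N/2}}{N},
\]
and with your choice $N=\lfloor 2\log_q d\rfloor$ this is $\asymp d/\log_q d$, the same order as the main term, so the proof does not close. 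The remedy is precisely what the paper does: take $N \sim 2\log_q d - c\,\log_q\log_q d$ for a suitable $c>0$, which forces $q^{N/2}\ll d/(\log_q d)^{c/2}$ while leaving the main term $g\pi/(N+1)=(1+o(1))\,\pi d/(4\log_q d)$ unchanged.
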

We shall give two different proofs of this result.

\subsubsection{The antiderivatives of the argument} Note that $\theta \mapsto S(\theta, \chi_{D})$ has mean value zero in $\R/\Z$; see Lemma \ref{Lem:Bernoulli_rep} below. It is then convenient to consider a specific sequence of antiderivatives by setting $S_0(\theta, \chi_{D}) = S(\theta, \chi_{D})$ and, for $n \geq 1$ and $\theta \in \R/\Z$,
\begin{align}\label{20250908_17:29}
S_n(\theta, \chi_{D}) := \int_0^{\theta} S_{n-1}(\alpha, \chi_{D}) \,\d \alpha + c_n\,,
\end{align}
where, at each step, the constant $c_n = c_n (\chi_{D})$ is chosen in a way so that $\theta \mapsto S_n(\theta, \chi_{D})$ has mean-value zero in $\R/\Z$, i.e., $ \int_{\R/\Z} S_{n}(\theta, \chi_{D}) \,\d \theta = 0$, in sympathy with the case of the Riemann zeta-function, as described in \eqref{20250905_14:51}. We give an explicit expression for $c_n$ in \eqref{20250916_12:01} below. Observe that $S_0$ has jump discontinuities at the $\theta_j$'s, but all the functions $S_n$, for $n\geq 1$, are continuous. Note that $S_{n-1}$ with mean value zero in $\R/\Z$ ensures that $S_n$ is indeed a $1$-periodic function.

\smallskip

Before we state our next result, we need to briefly recall some basic facts about the Bernoulli polynomials and the Bernoulli periodic functions. These functions will play a relevant role in this work. The Bernoulli polynomials $B_n(x)$ can be defined by the power series expansion
\begin{equation*}
\dfrac{t \,e^{xt}}{e^t -1} = \sum_{n=0}^{\infty}\dfrac{B_n(x)}{n!}t^n\,,
\end{equation*}
where $|t| < 2\pi$. The first Bernoulli polynomials are $B_0(x) =1$, $B_1(x) = x -\tfrac12$, $B_2(x) = x^2 - x + \tfrac16$, $B_3(x)= x^3 - \tfrac32x^2 + \tfrac12x$. For $n \geq 0$, $n \neq 1$, let us define the Bernoulli periodic functions $\mathcal{B}_n(x)$ by
\begin{equation}\label{20250908_16:27}
\mathcal{B}_n(x) := B_n(x - \lfloor x \rfloor)\,,
\end{equation}
where $\lfloor x \rfloor$ denotes the largest integer not exceeding $x$. For $n = 1$ we make a minor adjustment to include a renormalization at $x = 0$ in \eqref{20250908_16:27}, and define 
\begin{align*}
\mathcal{B}_1(x) :=
\begin{cases}
 x - \lfloor x \rfloor - \frac12 & \textrm{if} \ x\notin \Z;\\
 0 & \textrm{if} \ x\in \Z.
 \end{cases}
\end{align*}
 In this way, the function $\mathcal{B}_1:\R/\Z \to \R$ is the classical sawtooth function in this paper. Throughout the rest of the paper we let
\begin{align}\label{20250912_17:34}
M_n:= \max_{x \in [0,1]} B_n(x) \ \ \ \ {\rm and} \ \ \ \ m_n:= \min_{x \in [0,1]} B_n(x).
\end{align}
 
\smallskip

The next result is not only the analogue of \eqref{20250903_14:30}, but a refined analogue of it in half of the cases, as we explain in \S \ref{Sec1.3.1} below. 
\begin{theorem} \label{Thm3}
For  $d = 2g +1$, $D \in \mathcal{H}_d$, $n \geq1 $ and $\theta  \in \R/\Z$, we have, as $d \to \infty$,
\begin{align}\label{20250912_13:18}
- \left(\frac{A_n^-}{(2\pi)^n}  + o(1) \right)  \frac{d}{(\log_q d)^{n+1}} \leq S_n(\theta, \chi_{D}) \leq \left(\frac{A_n^+}{(2\pi)^n}   + o(1) \right) \frac{d}{(\log_q d)^{n+1}}\,,
\end{align}
where 
\begin{align}\label{20250912_00:19}
A_n^- = \frac{\pi^n}{2} \cdot \frac{M_{n+1}}{(n+1)!}  \ \ \ {\rm and} \ \ \ A_n^+ =- \frac{\pi^n}{2}\cdot \frac{m_{n+1}}{(n+1)!}.  
\end{align}
\end{theorem}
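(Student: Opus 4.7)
My approach combines a Bernoulli-type representation of $S_n(\theta,\chi_D)$ with extremal trigonometric polynomial majorants and minorants for the Bernoulli periodic functions, and then uses the explicit formula for a short character sum.

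\smallskip

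First, I would establish (what should be the content of Lemma~\ref{Lem:Bernoulli_rep}) the identity
\[
S_n(\theta,\chi_D)\;=\;-\sum_{j=1}^{2g}\frac{\mathcal{B}_{n+1}(\theta-\theta_j)}{(n+1)!},\qquad n\geq 1,\ \theta\in\R/\Z.
\]
The base case $S_0(\theta,\chi_D) = -\sum_{j}\mathcal{B}_1(\theta-\theta_j)$ follows from the factorization \eqref{20250908_17:40} and the argument principle, using the identity $1-e(\phi) = 2\sin(\pi\phi)\,e^{i\pi\mathcal{B}_1(\phi)}$ valid for $\phi\in(0,1)$. Iterating the antiderivative rule $\int_0^\theta \mathcal{B}_n(\alpha-\theta_j)\,\d\alpha = (\mathcal{B}_{n+1}(\theta-\theta_j)-\mathcal{B}_{n+1}(-\theta_j))/(n+1)$, together with the mean-zero choice of $c_n$ in \eqref{20250908_17:29} (which is consistent because $\int_{\R/\Z}\mathcal{B}_{n+1}=0$), delivers the formula for all $n\geq 1$.

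\smallskip

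Next, for a parameter $K$ to be optimized, I would construct real trigonometric polynomials $F_K^\pm$ of degree at most $K$ satisfying
\[
F_K^-(\phi)\;\le\;-\frac{\mathcal{B}_{n+1}(\phi)}{(n+1)!}\;\le\;F_K^+(\phi),\qquad |\widehat{F_K^\pm}(k)|=O(1),
\]
with sharp extremal means
\[
\widehat{F_K^+}(0)\;\sim\;\frac{-m_{n+1}}{(n+1)!\,K^{n+1}},\qquad \widehat{F_K^-}(0)\;\sim\;\frac{-M_{n+1}}{(n+1)!\,K^{n+1}}.
\]
Coupled with the Bernoulli representation this gives $\sum_j F_K^-(\theta-\theta_j)\le S_n(\theta,\chi_D)\le\sum_j F_K^+(\theta-\theta_j)$, whose Fourier expansion reads
\[
\sum_j F_K^\pm(\theta-\theta_j) \;=\; 2g\,\widehat{F_K^\pm}(0)\;+\;\sum_{0<|k|\le K}\widehat{F_K^\pm}(k)\,e(k\theta)\sum_{j=1}^{2g}e(-k\theta_j).
\]
The inner sum is controlled by the explicit formula $\sum_j e(-k\theta_j) = -q^{-|k|/2}\sum_{\deg f = |k|}\Lambda(f)\chi_D(f)$, which combined with the trivial prime polynomial identity $\sum_{\deg f = |k|}\Lambda(f) = q^{|k|}$ gives $|\sum_j e(-k\theta_j)|\le q^{|k|/2}$. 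The ``error" contribution is therefore $O(K\,q^{K/2})$.

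\smallskip

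Choosing $K = 2\log_q d - (2n+3)\log_q\log_q d$ makes the error $o(d/(\log_q d)^{n+1})$ while keeping $K^{n+1} = (2\log_q d)^{n+1}(1+o(1))$. The main term then becomes
\[
2g\,\widehat{F_K^+}(0)\;\sim\;\frac{-m_{n+1}}{2^{n+1}(n+1)!}\cdot\frac{d}{(\log_q d)^{n+1}}\;=\;\frac{A_n^+}{(2\pi)^n}\cdot\frac{d}{(\log_q d)^{n+1}},
\]
establishing the upper bound in \eqref{20250912_13:18}; the lower bound follows symmetrically from $F_K^-$. As a consistency check, plugging $n=0$ into this scheme with $-m_1=1/2$ recovers the constant $1/4$ of Theorem~\ref{Thm2}. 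The principal obstacle is the construction of $F_K^\pm$ with the \emph{precise} sharp extremal constants $-m_{n+1}/(n+1)!$ and $-M_{n+1}/(n+1)!$ in their means: the decay rate $K^{-(n+1)}$ can be obtained by a standard Fejér-type mollification concentrated near the extrema of $\mathcal{B}_{n+1}$, but pinning down the optimal constants is a delicate Beurling--Selberg-type one-sided approximation problem for a smooth periodic target, which is the main place where genuine extremal Fourier analysis enters.
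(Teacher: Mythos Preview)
Your approach is essentially identical to the paper's unified proof: the Bernoulli representation (Lemma~\ref{Lem:Bernoulli_rep}), extremal trigonometric majorants and minorants for $\mathcal{B}_{n+1}$ (Lemma~\ref{Lem7}, quoted from \cite{Car1}), the bound $\big|\sum_j e(k\theta_j)\big|\le q^{|k|/2}$ via the prime polynomial theorem, and optimization of the degree. One small arithmetic slip: with your stated error $O(Kq^{K/2})$, the choice $K=2\log_q d-(2n+3)\log_q\log_q d$ gives an error of order $d/(\log_q d)^{n+1/2}$, which swamps the main term; either take the coefficient $2n+6$ as the paper does, or observe that the geometric sum $\sum_{0<|k|\le K}q^{|k|/2}$ is actually $O(q^{K/2})$, in which case your choice already works.
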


\noindent {\sc Remarks}: (i) The factor $(2\pi)^n$ that appears dividing $A_n^{\pm}$ in \eqref{20250912_13:18} is a normalization factor due to the fact that we are choosing to work with $1$-periodic functions instead of $2\pi$-periodic functions. Had we defined, for instance, $\widetilde{S_0}(\tau, \chi_{D}):= S_0\big(\frac{\tau}{2\pi}, \chi_{D}\big)$, for $\tau \in \R/2\pi\Z$, its antiderivatives would be $\widetilde{S_n}(\tau, \chi_{D}):= (2\pi)^n S_n\big(\frac{\tau}{2\pi}, \chi_{D}\big)$, and the factor $(2\pi)^n$ would be removed from \eqref{20250912_13:18} for $\widetilde{S_n}(\tau, \chi_{D})$.

\smallskip

\noindent (ii) In light of Lemma \ref{Lem:Bernoulli_rep} and the fact that $\mathcal{B}_{n+1}' = (n+1)\mathcal{B}_{n}$ for $n \geq 1$, one observes that 
\begin{align}\label{20250916_12:01}
c_n = - \frac{1}{(n+1)!} \sum_{j=1}^{2g} \mathcal{B}_{n+1}(-\theta_j).
\end{align}
Note that, when $n$ is even, one has $c_n = 0$ by the symmetry of the $\theta_j$'s (note the fact that $\mathcal{B}_{n+1}(- \theta) = (-1)^{n+1} \mathcal{B}_{n+1}(\theta) $; in particular, $\mathcal{B}_{n+1}(0) = \mathcal{B}_{n+1}(\tfrac12) = 0$ when $n$ is even).

\smallskip

\noindent(iii) As we shall see in the proof, in each of the Theorems \ref{Thm1}, \ref{Thm2} and \ref{Thm3} the term $o(1)$ is in fact $O(\log_q \log_q d \, / \log_q d)$ (with implicit constant depending on $n$ in the case of Theorem \ref{Thm3}). 

\smallskip

\noindent (iv) One can consider even degrees $d = 2g+2$ in the definition \eqref{20250908_16:48} as well. In this case, the corresponding $\mathcal{L}(u,\chi_{D})$ in \eqref{20250908_16:49} is a polynomial of degree $2g+1$, with one trivial zero at $u=1$, and the remaining $2g$ zeros verifying the Riemann hypothesis, i.e., on the circle $|u| = q^{-1/2}$. The results in Theorems \ref{Thm1}, \ref{Thm2} and \ref{Thm3} continue to hold in this case, with minor modifications in the proofs to accommodate an additional contribution $O(1)$ from the trivial zero at $u=1$. To avoid such unnecessary technicalities, we opted for the exposition in the case $d=2g +1$, which is standard for this model in the literature; see, e.g., \cite{AGK, BF, BF2, Flo1}.

\subsection{Lessons from the function field setup} \label{Sec1.3} There are a couple of hidden insights and mysterious connections coming from Theorems \ref{Thm1}, \ref{Thm2} and \ref{Thm3} that we now describe.

\subsubsection{Matching values} \label{Sec1.3.1}

We elaborate on the comparison between Theorem \ref{Thm3} and inequality \eqref{20250903_14:30}. It is well-known that, for $n = 4k+1$, 
\begin{align*}
M_{n+1} = B_{n+1}(0) = \frac{2 \cdot (n+1)! \cdot  \zeta(n+1)}{(2\pi)^{n+1}} \ \  ; \ \ m_{n+1} = B_{n+1}(\tfrac12) = - \frac{2 \cdot (n+1)!\, \big(1 - 2^{-n}\big)\,\zeta(n+1)}{(2\pi)^{n+1}} \,,
\end{align*}
and, for $n = 4k-1$,
\begin{align*}
M_{n+1} = B_{n+1}(\tfrac12) = \frac{2 \cdot (n+1)!\, \big(1 - 2^{-n}\big)\,\zeta(n+1)}{(2\pi)^{n+1}} \ \ \ ;  \ \ \ m_{n+1} = B_{n+1}(0) = -  \frac{2 \cdot (n+1)! \cdot  \zeta(n+1)}{(2\pi)^{n+1}}\,;
\end{align*}
see, e.g., \cite[Theorem 1]{Leh}. This plainly implies that, for $n$ odd, we have the exact match
$$A_n^{-} = C_n^{-}  \ \ \ {\rm and} \ \ \ A_n^{+} = C_n^{+}\,,$$
with $C_n^{+}$ and $C_n^-$ defined in \eqref{20250911_17:58}.

\smallskip

On the other hand, for $n\geq 2$ even, Lehmer \cite[Eqs. (15)-(19)]{Leh} showed that $M_{n+1} = - m_{n+1}$ and 
\begin{align*}
 \frac{2 \cdot (n+1)! \, ( 1 - 3^{-n})}{(2\pi)^{n+1}}  < M_{n+1} <  \frac{2 \cdot (n+1)!}{(2\pi)^{n+1}} .
\end{align*}
This implies that, for $n\geq 2$ even, 
\begin{align}\label{20250911_23:58}
 \frac{(1 - 3^{-n})}{\pi \cdot 2^{n+1}} < A_n^- = A_n^+ < \frac{1}{\pi \cdot 2^{n+1}}.
\end{align}
Inequality \eqref{20250911_23:58} in turn implies that, for $n \geq 2$ even,  
\begin{align}\label{20250912_00:13}
A_n^{\pm} < C_n^{\pm}\,,
\end{align}
with $C_n^{\pm}$ defined in \eqref{20250911_23:59}. This is a simple numerical verification for small $n$ as $C_n^{\pm} \sim \frac{\sqrt{2}}{\pi \cdot 2^{n+1}}$ quickly. From \eqref{20250912_00:13} we see that we do have a refined analogue in these cases, and that we are actually winning by a factor of $\sqrt{2}$ asymptotically.

\smallskip

The analogy between the function field setup and the number field setup motivates the following questions in the theory of the Riemann zeta-function.

\makeatletter
\renewcommand{\thetheorem}{\Alph{theorem}} 
\setcounter{theorem}{0}
\makeatother

\begin{question}[Exact match]\label{QuestionA}
Let $A_n^{\pm}$ be defined as in \eqref{20250912_00:19}. For $n\geq 2$ even, as $t \to \infty$, do we have 
\begin{equation*}
-\left( A^-_n + o(1)\right) \frac{\log t}{(\log \log t)^{n+1}} \ \leq \ S_n(t) \ \leq \ \left( A^+_n + o(1)\right) \frac{\log t}{(\log \log t)^{n+1}}\,?
\end{equation*}
\end{question}

One can weaken a bit the formulation and ask for an approximate match instead. From \eqref{20250911_23:58} note that $A_n^{\pm} \sim \frac{1}{\pi \cdot 2^{n+1}}$ as $n \to \infty$.

\begin{question}[Approximate match]\label{QuestionB}
For $n\geq 2$ even, as $t \to \infty$, do we have
\begin{equation*}
-\left( \widetilde{A_n^{-}} + o(1)\right) \frac{\log t}{(\log \log t)^{n+1}} \ \leq \ S_n(t) \ \leq \ \left( \widetilde{A^+_n} + o(1)\right) \frac{\log t}{(\log \log t)^{n+1}}\,,
\end{equation*}
for certain constants $ \widetilde{A_n^{\pm}}$ verifying $\lim_{n \to \infty} \pi \cdot 2^{n+1}\cdot \widetilde{A_n^{\pm}}   = 1$\,?
\end{question}

\makeatletter
\renewcommand{\thetheorem}{\arabic{theorem}} 
\setcounter{theorem}{3}
\makeatother

\subsubsection{Strategy and mysterious connections} A classical problem in approximation theory is the problem of finding trigonometric polynomials of a given degree that majorize/minorize a given periodic function, optimizing the $L^1(\R/\Z)$-error. This problem has been treated, for instance, in \cite{Car1, CLV, CV, Mon, Vaa}. It can be seen as the periodic analogue of the classical Beurling--Selberg extremal problem of finding one-sided bandlimited approximations for a given real-valued function on $\R$, optimizing the $L^1(\R)$-error; see, e.g., \cite{CL, CLV, CV, Lit, Vaa}. These two worlds meet again in this paper, but they do so in a somewhat mysterious way.

\smallskip

One of the key messages from this paper is that the proofs of Theorems  \ref{Thm1}, \ref{Thm2} and \ref{Thm3} are amenable to a common general strategy: (i) finding a representation formula for our objects in terms of sums of translates of a suitable periodic function;  (ii) finding extremal trigonometric polynomials of degree $N$ that majorize/minorize such a periodic function; (iii) estimating the sum over zeros with the prime polynomial theorem in order to choose the degree $N$ optimally. The periodic function that arises in connection to Theorem \ref{Thm1} is $\varphi(\theta) = \log 2| \sin(\pi\theta)|$, while the one in connection to $S_n(\theta, \chi_D)$ in Theorems \ref{Thm2} and \ref{Thm3} is the Bernoulli periodic function $\mathcal{B}_{n+1}(\theta)$. It turns out that the extremal trigonometric polynomials that majorize/minorize these functions have already been characterized in \cite{Car1,CV}.

\smallskip

The proofs of \eqref{20250903_14:28}, \eqref{20250903_14:29}, and \eqref{20250903_14:30} when $n$ is odd, follow a parallel strategy via the Guinand-Weil explicit formula applied to the Beurling-Selberg extremal bandlimited majorants and minorants of certain special real-valued functions in each case \cite{CCM, CC, CS}. These are represented in the table below. 
\vspace{-0.3cm}
\renewcommand{\arraystretch}{1.7}
\begin{align*}
\begin{tabular}{|c|c|c|}
\hline
 &  periodic setting &  real line setting     \\[0.1cm] 
 \hline
 modulus&  $\log 2| \sin(\pi\theta)|$&  $ \log \left(\displaystyle\frac{4+x^2}{x^2}\right)$    \\[0.2cm]
 \hline
  $S_0$&  $\mathcal{B}_{1}(\theta)$& $ \arctan\left(\displaystyle\frac{1}{x}\right) - \displaystyle\frac{x}{1+x^2}$     \\[0.2cm]
   \hline 
 $S_{2k+1}$&  $\mathcal{B}_{2k+2}(\theta)$&  $ \dfrac{1}{(2k+1)}\left[(-1)^{k+1}x^{2k+1}\arctan\left(\frac{1}{x}\right) + \sum_{\ell=0}^{k}\dfrac{(-1)^{k-\ell}}{2\ell+1}x^{2k-2\ell}\right]$  
     \\[0.2cm]
 \hline
\end{tabular}
\end{align*}

\smallskip

\noindent For each of the functions on the right column above, the solution of the Beurling-Selberg extremal problem was achieved \cite{CL, CLV, CV}, and that is what allows the overall strategy to work. In principle, the function counterparts in the table above bear no obvious relation to each other (e.g., the ones on the left are not the periodizations of the ones on the right) and yet, when the solutions of both extremal problems for such functions are put into our framework, after a suitable cancellation of other components of the main term arising from the explicit formulas, one is mysteriously led to the matching constants $A_{n}^{\pm} = C_{n}^{\pm}$ for $n=0$ and $n$ odd (and the $(\log 2)/2$ in the case of the modulus).

\smallskip

In the case of $n = 2k$ with $k\geq 1$, as pointed out in \cite{CC}, there is a function $f_{2k}$ naturally connected to the problem of bounding $S_{2k}(t)$, namely 
\begin{equation}\label{20250922_15:25}
f_{2k}(x)=(-1)^k x^{2k}\arctan\left(\frac{1}{x}\right) +\sum_{\ell=0}^{k-1}\frac{(-1)^{k-\ell+1}}{2\ell+1} \,x^{2k-2\ell-1} -\dfrac{x}{(2k+1)(1+x^2)}.
\end{equation}  
The main obstacle in this case is that the Beurling-Selberg extremal problem for these functions is not yet solved. These are odd and continuous functions, a class that is considered particularly difficult for this problem. The authors in \cite{CC} then  develop an alternative (non-optimal) approach in order to arrive at \eqref{20250903_14:30} in this case. Our wishful thinking is that, if one were actually able to solve the Beurling-Selberg extremal problem for $f_{2k}$ in \eqref{20250922_15:25} and run the complete strategy, one would likely be in a position to answer Questions \ref{QuestionA} and \ref{QuestionB} above.

\section{Auxiliary results}

\subsection{The prime polynomial theorem} The zeta-function over $\mathbb{A} = \F_q[x]$ is defined by
$$\zeta_{\mathbb{A} }(s) := \sum_{f \ {\rm monic}} \frac{1}{|f|^{s}}=\prod_{P \ {\rm prime}}\left(1-|P|^{-s} \right)^{-1},\;\; \operatorname{Re}(s)>1.$$
With the usual change of variables $u = q^{-s}$, and observing that there are $q^k$ monic polynomials of degree $k$, one arrives at 
\begin{align}\label{20250910_13:02}
\mathcal{Z}_{\mathbb{A} }(u) = \sum_{f\ {\rm monic}} u^{\deg(f)}=\prod_{P\ {\rm prime}}\left(1- u^{\deg(P)} \right)^{-1} = \frac{1}{1 - qu}.
\end{align}
Logarithmic differentiation of \eqref{20250910_13:02} then yields the prime polynomial theorem
\begin{align}\label{20250910_13:31}
\sum_{\substack{f \ {\rm monic} \\ \deg(f) = k}} \Lambda(f) = q^k,
\end{align}
where, for a monic polynomial $f$, we write $\Lambda(f) := \deg(P)$ if $f = P^k$ for some prime polynomial $P$ and positive integer $k$, and $\Lambda(f) := 0$ otherwise.

\smallskip

In a similar way, logarithmic differentiation of \eqref{20250908_16:49} and \eqref{20250908_17:40} yields
\begin{align}\label{20250910_13:27}
\sum_{k=1}^\infty \left( \sum_{\substack{f \ {\rm monic} \\ \deg(f) = k}}  \chi_{D}(f)\, \Lambda(f)\right) u^{k-1} = \frac{\mathcal{L}'}{\mathcal{L}}(u,\chi_{D}) = - \sum_{k=1}^\infty \left( \sum_{j=1}^{2g} u_j^{-k}\right) u^{k-1}.
\end{align}
Recalling that $u_j = q^{-1/2} e(\theta_j)$, expression \eqref{20250910_13:27} leads to the following identity, for each $k \geq 1$, 
\begin{align}\label{20250910_13:30}
- \sum_{j=1}^{2g} e(-k \theta_j) = \frac{1}{q^{k/2}} \sum_{\substack{f \ {\rm monic} \\ \deg(f) = k}}  \chi_{D}(f) \,\Lambda(f).
\end{align}

\smallskip

By combining \eqref{20250910_13:31} and \eqref{20250910_13:30} we get, for each $k \neq 0$, 
\begin{align}\label{20251910_14:09}
\left|\sum_{j=1}^{2g} e(k \theta_j)\right| = \left|\frac{1}{q^{|k|/2}} \sum_{\substack{f \ {\rm monic} \\ \deg(f) = |k|}}  \chi_{D}(f) \,\Lambda(f) \right| \leq \frac{1}{q^{|k|/2}} \sum_{\substack{f \ {\rm monic} \\ \deg(f) = |k|}}  \Lambda(f)  = q^{|k|/2}.
\end{align}
Estimate \eqref{20251910_14:09} will be later used in the proofs of Theorems \ref{Thm1}, \ref{Thm2} and \ref{Thm3}.

\subsection{Representation formulas} We start here with a simple observation.
\begin{lemma}\label{Lem4_modulus}
For each $\theta \in \R/\Z$, we have
\begin{align*}
\log\big|\mathcal{L}\big(q^{-1/2} e(\theta),\chi_{D}\big)\big| = \sum_{j=1}^{2g} \log 2| \sin\pi(\theta - \theta_j)|.
\end{align*}
\end{lemma}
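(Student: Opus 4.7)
The plan is to reduce the identity to a direct computation using the Hadamard-type factorization of $\mathcal{L}(u,\chi_D)$ stated in \eqref{20250908_17:40}. Starting from
\[
\mathcal{L}(u,\chi_{D}) = \prod_{j=1}^{2g}\left(1 - \frac{u}{u_j}\right),
\]
I would substitute $u = q^{-1/2} e(\theta)$ and use the fact that $u_j = q^{-1/2} e(\theta_j)$ so that the ratios $u/u_j$ simplify to $e(\theta - \theta_j)$, yielding
\[
\mathcal{L}\bigl(q^{-1/2} e(\theta), \chi_D\bigr) = \prod_{j=1}^{2g} \bigl(1 - e(\theta - \theta_j)\bigr).
\]

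Next I would take absolute values of both sides and apply the elementary identity $|1 - e(\phi)| = 2|\sin \pi \phi|$, which follows from $|1 - e(\phi)|^2 = 2 - 2\cos(2\pi\phi) = 4\sin^2(\pi\phi)$. This gives
\[
\bigl|\mathcal{L}\bigl(q^{-1/2} e(\theta), \chi_D\bigr)\bigr| = \prod_{j=1}^{2g} 2\bigl|\sin \pi(\theta - \theta_j)\bigr|,
\]
and taking logarithms turns the product into the desired sum. There is no real obstacle here: the statement is essentially a restatement of the factorization in a more convenient form on the critical circle, and the only ingredient beyond \eqref{20250908_17:40} is the trigonometric identity for $|1 - e(\phi)|$. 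One small point worth noting is that the identity is understood in $[-\infty, \infty)$ since both sides are $-\infty$ precisely when $\theta \equiv \theta_j \pmod{1}$ for some $j$, i.e., when $q^{-1/2} e(\theta)$ is a zero of $\mathcal{L}(u,\chi_D)$.
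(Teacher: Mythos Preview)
Your proof is correct and follows exactly the approach the paper intends: the paper's own proof is simply ``This plainly follows from \eqref{20250908_17:40},'' and your argument spells out precisely that computation via $u/u_j = e(\theta-\theta_j)$ and the identity $|1-e(\phi)| = 2|\sin\pi\phi|$.
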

\begin{proof}
This plainly follows from \eqref{20250908_17:40}. 
\end{proof}

Recall that, for $n\geq 1$, the Bernoulli periodic functions have the Fourier series expansion 
\begin{equation}\label{20250909_14:44}
\mathcal{B}_n(\theta) = -\dfrac{n!}{(2\pi i)^n} \sum_{\substack{k=-\infty \\ k \neq 0}}^{\infty}\dfrac{1}{k^n} \, e(k \theta)
\end{equation}
(for $n=1$ the equality above is understood as the limit of the symmetric partial sums). In particular, for $n\geq1$, note that each $\mathcal{B}_n$ has mean value zero in $\R/\Z$, i.e., $\widehat{\mathcal{B}_n}(0) = 0$. 

\smallskip

We proceed with a formula that connects the functions $S_n(\theta, \chi_{D})$, for $n\geq 0$, to the Bernoulli periodic functions. The case $n=0$ of the following lemma already appears in \cite[Theorem 4]{AGK}, and we include a brief proof for the convenience of the reader. 

\begin{lemma}\label{Lem:Bernoulli_rep}
Let $n$ be a non-negative integer. For each $\theta \in \R/\Z$, we have
\begin{align}\label{20250905_16:46}
S_n(\theta, \chi_{D}) =  - \frac{1}{(n+1)!}\sum_{j=1}^{2g} \mathcal{B}_{n+1}(\theta - \theta_j).
\end{align}
\end{lemma}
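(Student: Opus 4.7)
The natural approach is induction on $n$; the case $n=0$ is the base case, and once it is in hand the higher cases will follow almost mechanically from two properties of the Bernoulli periodic functions that mesh perfectly with the definition \eqref{20250908_17:29}: the recursion $\mathcal{B}_{n+1}' = (n+1)\mathcal{B}_n$ for $n \geq 1$, and the mean-zero property $\int_0^1 \mathcal{B}_{n+1}(\alpha)\,\d\alpha = 0$. The main obstacle is localized in the base case, specifically in identifying the path-dependent $\Delta_{\Gamma_\theta}\arg \mathcal{L}$ with the imaginary part of a natural holomorphic logarithm of $\mathcal{L}$.

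\textbf{Base case.} To establish \eqref{20250905_16:46} for $n=0$, I would invoke the Riemann hypothesis for $\mathcal{L}(u,\chi_D)$: since there are no zeros in the open disk $|u| < q^{-1/2}$, one can define a holomorphic branch of $\log \mathcal{L}(u,\chi_D)$ there by normalizing $\log \mathcal{L}(0,\chi_D) = 0$. The real segment from $0$ to $q^{-2}$ is entirely inside the disk, and on it $\mathcal{L}$ is real and positive (by continuity from $\mathcal{L}(0,\chi_D) = 1$, using that there are no real zeros strictly below $q^{-1/2}$), so prepending it to $\Gamma_\theta$ contributes nothing to the variation of the argument. Hence
\begin{equation*}
\Delta_{\Gamma_\theta}\arg \mathcal{L}(u,\chi_D) = \operatorname{Im} \log \mathcal{L}\big(q^{-1/2}e(\theta),\chi_D\big),
\end{equation*}
where the boundary value, via the factorization \eqref{20250908_17:40}, equals $\sum_{j=1}^{2g} \log(1 - e(\theta - \theta_j))$ in the principal branch (the continuous limit of $\log(1 - u/u_j)$ from the disk). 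A direct calculation using $1 - e(\alpha) = -2i\sin(\pi\alpha)\,e(\alpha/2)$ then gives $\operatorname{Im}\log(1 - e(\alpha)) = \pi\mathcal{B}_1(\alpha)$ for $\alpha \in \R\setminus\Z$. Summing over $j$ and dividing by $-\pi$ yields \eqref{20250905_16:46} for $n=0$.

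\textbf{Inductive step.} For $n \geq 1$, define $T_n(\theta) := -\frac{1}{(n+1)!}\sum_{j=1}^{2g} \mathcal{B}_{n+1}(\theta - \theta_j)$ and assume inductively that $S_{n-1} = T_{n-1}$. Differentiating termwise and using $\mathcal{B}_{n+1}' = (n+1)\mathcal{B}_n$ (valid pointwise for $n \geq 2$ and almost everywhere for $n = 1$, which suffices since $T_n$ is absolutely continuous) produces $T_n'(\theta) = T_{n-1}(\theta) = S_{n-1}(\theta)$, so $T_n$ is an antiderivative of $S_{n-1}$. Moreover, $T_n$ has mean zero on $\R/\Z$ since each $\mathcal{B}_{n+1}$ does, and this is precisely the normalization that uniquely characterizes $S_n$ among the antiderivatives of $S_{n-1}$. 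Therefore $S_n = T_n$, closing the induction.
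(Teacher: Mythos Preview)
Your argument is correct and matches the paper's approach: a direct verification of the base case $n=0$ via the factorization \eqref{20250908_17:40} and the identity $\operatorname{Im}\log(1-e(\alpha)) = \pi\mathcal{B}_1(\alpha)$, followed by induction using $\mathcal{B}_{n+1}' = (n+1)\mathcal{B}_n$ together with the mean-zero normalization that singles out $S_n$. The only cosmetic difference lies in how the starting point $q^{-2}$ of $\Gamma_\theta$ is handled in the base case---you prepend the real segment $[0,q^{-2}]$ and use that $\mathcal{L}$ is real and positive there, whereas the paper evaluates the contribution $\sum_j\operatorname{Im}\log(1-q^{-3/2}e(-\theta_j))$ directly and cancels it via the conjugate-pair symmetry of the $\theta_j$.
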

\begin{proof} We first prove the case $n=0$. Assume first that $\theta \neq \theta_j$. Considering the principal branch of the logarithm, from \eqref{20250908_17:40} and definition \eqref{20250911_14:12} we get 
\begin{align*}
S(\theta, \chi_{D}) & =  - \frac{1}{\pi} \, \Delta_{\Gamma_{\theta}}\arg \mathcal{L}(u,\chi_{D}) = - \frac{1}{\pi} \sum_{j=1}^{2g} \Delta_{\Gamma_{\theta}}\arg  \left(1 -  \frac{u}{u_j}\right)\\
 &  = - \frac{1}{\pi} \sum_{j=1}^{2g}  \operatorname{Im} \left( \log  \left(1 -  e(\theta - \theta_j) \right) - \log  \left(1 -  q^{-3/2} e(-\theta_j)\right)\right).
\end{align*} 
Since the zeros $u_j = q^{-1/2} e(\theta_j)$ appear in conjugate pairs, we have 
\begin{align*}
\sum_{j=1}^{2g} \operatorname{Im}\log  \left(1 -  q^{-3/2} e(-\theta_j)\right) = 0.
\end{align*}
Noting that
\begin{align*}
\operatorname{Im} \log  \left(1 -  e(\theta) \right) = \pi  \, \mathcal{B}_{1}(\theta)
\end{align*}
for any $\theta \in \R / \Z$, we arrive at the desired identity \eqref{20250905_16:46} in the case $n=0$.

\smallskip

The general case plainly follows by induction from the way we defined $S_n(\theta, \chi_{D})$ in \eqref{20250908_17:29}, using the fact that the right-hand side of \eqref{20250905_16:46} has mean value zero in $\R/\Z$, and $\mathcal{B}_{n+1}'(x) = (n+1)\mathcal{B}_{n}(x)$ for $n \geq 1$ (for $n=1$ it suffices that this holds on $\R/\Z \setminus \{0\}$).  
\end{proof}

\subsection{Extremal trigonometric polynomials} 
\smallskip

We collect here a few results on one-sided approximations of periodic functions by trigonometric polynomials that will be suitable for our purposes. The first one is a result from Carneiro and Vaaler \cite[Theorem 1.5]{CV}.

\begin{lemma} \label{Lem6_maj_varphi}
Let $\varphi:\R /\Z \to \R$ be given by $\varphi(\theta) = \log 2| \sin(\pi\theta)|$, and let $N$ be a non-negative integer.  Then there exists a unique real-valued trigonometric polynomial $U_N(\theta) = \sum_{k = -N}^{N}  \widehat{U_N}(k) \, e(k\theta) $
such that $\varphi(\theta) \leq U_N(\theta) $ for each $\theta \in \R/\Z$ and such that 
\begin{align*}
\int_{\R/\Z} U_N(\theta) \, \d \theta= \frac{\log 2}{N+1}. 
\end{align*}
Moreover, for each integer $k$ with $1 \leq |k| \leq N$, one has 
\begin{align*}
-\frac{1}{2|k|} \leq \widehat{U_N}(k)  \leq 0.
\end{align*}
\end{lemma}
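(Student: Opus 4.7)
My plan follows the classical duality-plus-interpolation scheme for periodic Beurling--Selberg majorants. Set $\xi_j := (2j-1)/(2(N+1))$ for $j=1,\ldots,N+1$, the zeros of $\cos(\pi(N+1)\theta)$ in $[0,1)$. The orthogonality of additive characters modulo $N+1$ yields the shifted Gauss-quadrature identity
\begin{equation*}
\frac{1}{N+1}\sum_{j=1}^{N+1}P(\xi_j) \;=\; \widehat P(0), \qquad \text{for every trigonometric polynomial $P$ of degree $\leq N$.}
\end{equation*}
Applied to $P=U_N$ and combined with $U_N\geq\varphi$, this gives
\begin{equation*}
\widehat{U_N}(0) \;\geq\; \frac{1}{N+1}\sum_{j=1}^{N+1}\varphi(\xi_j) \;=\; \frac{1}{N+1}\log \prod_{j=1}^{N+1} 2\sin\!\left(\tfrac{(2j-1)\pi}{2(N+1)}\right) \;=\; \frac{\log 2}{N+1},
\end{equation*}
where the product equals $2$ by setting $z=1$ in the factorization $z^{N+1}+1 = \prod_{j}(z-e^{i\pi(2j-1)/(N+1)})$ and taking moduli. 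Equality in the lower bound is equivalent to $U_N(\xi_j)=\varphi(\xi_j)$ for every $j$.

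Next, I would construct $U_N$ as the unique even trigonometric polynomial of degree $N$ satisfying the Hermite conditions $U_N(\xi_j)=\varphi(\xi_j)$ and $U_N'(\xi_j)=\varphi'(\xi_j)$ for all $j$. The evenness inherited from $\varphi$ reduces the $2N+1$ free real parameters to $N+1$; the symmetry $\xi_j \leftrightarrow 1-\xi_{N+2-j}$ collapses the $2(N+1)$ Hermite conditions to $N+1$ independent linear equations, producing a non-degenerate Chebyshev-type Vandermonde system with a unique solution. Uniqueness of the extremal majorant then follows: any competitor saturating the lower bound must interpolate $\varphi$ at each $\xi_j$, and because these are interior touching points of a non-negative difference $U_N-\varphi$ where both functions are $C^1$, first-order tangency $U_N'(\xi_j)=\varphi'(\xi_j)$ is automatic, pinning down the same Hermite interpolant.

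The analytic heart of the argument is the pointwise inequality $U_N\geq\varphi$ on $\R/\Z$. Following \cite{CV}, the strategy is to exhibit $U_N-\varphi$ explicitly as a non-negative expression of the form $\cos^2(\pi(N+1)\theta)\cdot H(\theta)$ with $H\geq 0$, built from the Hermite double-zero structure at the $\xi_j$ together with the identity $\log|2\sin\pi\theta|=\mathrm{Re}\log(1-e(\theta))$, which controls the logarithmic singularity at $\theta=0$. This is the main obstacle: there is no soft argument producing such a representation, and it is specific to the shape of $\varphi$ and to the particular node set $\{\xi_j\}$. The Fourier coefficient bound $-1/(2|k|)\leq \widehat{U_N}(k)\leq 0$ for $1\leq|k|\leq N$ then follows from the closed-form expression for $U_N$ coming out of the interpolation construction, combined with the vanishing of $U_N-\varphi$ at the $\xi_j$ and the identity $\widehat{\varphi}(k)=-1/(2|k|)$.
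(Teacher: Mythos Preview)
The paper does not prove this lemma: it is quoted verbatim from \cite[Theorem~1.5]{CV} with no argument supplied, so there is no ``paper's proof'' to compare against beyond the citation. Your sketch is therefore going well beyond what the paper itself does, and in broad strokes it follows the same interpolation-plus-quadrature philosophy that underlies the construction in \cite{CV}: the Gauss-type quadrature on the shifted $(N{+}1)$-point grid correctly yields the sharp lower bound $\widehat{U_N}(0)\geq\frac{\log 2}{N+1}$ (your product evaluation via $z^{N+1}+1$ is right), and the uniqueness mechanism---equality forces interpolation at the nodes, tangency forces first-order contact, and the resulting $2(N{+}1)$ Hermite conditions overdetermine a degree-$N$ polynomial---is sound.

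That said, two genuine gaps remain, and you have flagged the larger one yourself. First, the pointwise inequality $U_N\geq\varphi$ is the entire content of the existence claim, and you do not prove it; the passage about writing $U_N-\varphi=\cos^2(\pi(N{+}1)\theta)\cdot H(\theta)$ with $H\geq 0$ is a description of what one hopes to find rather than an argument. In \cite{CV} this step is carried out through an explicit construction (ultimately traced back to extremal majorants on the real line, periodized), not by soft interpolation reasoning, and there is no shortcut. Second, your justification of the coefficient bound $-\tfrac{1}{2|k|}\leq\widehat{U_N}(k)\leq 0$ is too vague. The soft inequality $|\widehat{U_N}(k)-\widehat\varphi(k)|\leq\widehat{U_N}(0)=\tfrac{\log 2}{N+1}$ coming from nonnegativity of $U_N-\varphi$ only gives $\widehat{U_N}(k)\in\bigl[-\tfrac{1}{2|k|}-\tfrac{\log 2}{N+1},\,-\tfrac{1}{2|k|}+\tfrac{\log 2}{N+1}\bigr]$, which for $|k|$ near $N$ does not force $\widehat{U_N}(k)\leq 0$. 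The sharp two-sided bound genuinely requires the explicit formula for $U_N$ established in \cite{CV}, not merely the interpolation conditions.
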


The next lemma is a result from \cite[Theorem 1]{Car1}, characterizing the extremal approximations to the Bernoulli periodic functions. It is worth mentioning that these, in turn, rely on the solution of the Beurling-Selberg extremal problem for the functions $x^k \,\sgn(x)$, for $k\geq 1$, due to Littmann \cite{Lit} (the case $k=0$ is originally due to Beurling, as remarked in Vaaler's classical survey \cite{Vaa}). For $n \geq 0$, recall the definition of $M_{n}$ and $m_n$ in \eqref{20250912_17:34}.

\begin{lemma}\label{Lem7}
Let $n$ and $N$ be non-negative integers. There exist unique trigonometric polynomials  $P_{n+1,N}^{\pm}(\theta) =\sum_{k = -N}^{N}  \widehat{P_{n+1,N}^{\pm}}(k) \, e(k\theta)$ such that 
\begin{align*}
P_{n+1,N}^{-}(\theta) \leq \mathcal{B}_{n+1}(\theta) \leq P_{n+1,N}^{+}(\theta)
\end{align*}
for each $\theta \in \R / \Z$ and 
\begin{align*}
\int_{\R/\Z} P_{n+1,N}^{-}(\theta)  \, \d \theta = \frac{m_{n+1}}{(N+1)^{n+1}}  \ \ \ \ {\rm and} \ \ \ \ \int_{\R/\Z} P_{n+1,N}^{+}(\theta)  \, \d \theta = \frac{M_{n+1}}{(N+1)^{n+1}}.
\end{align*}
Moreover, for each integer $k$ with $1 \leq |k| \leq N$, one has 
\begin{align}\label{20250909_14:34}
\left| \widehat{P_{n+1,N}^{\pm}}(k) \right| \ll_n \frac{1}{k^{n+1}}.
\end{align}
\end{lemma}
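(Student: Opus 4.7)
The plan is to construct the extremal trigonometric polynomials $P^{\pm}_{n+1,N}$ by periodizing the Beurling--Selberg bandlimited extremizers of the truncated power functions $f_n(x) := x^n \sgn(x)$, thereby transferring the extremal problem on $\R$ (resolved by Littmann \cite{Lit}) to the periodic problem on $\R/\Z$ for $\mathcal{B}_{n+1}$.

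First I would invoke Littmann's theorem: for any $\delta > 0$ and $n \geq 0$, there exist unique entire functions $F^{\pm}_{n,\delta}$ of exponential type $2\pi\delta$ that majorize/minorize $f_n$ on $\R$ and minimize $\int_{\R} |F^{\pm}_{n,\delta}(x) - f_n(x)|\,\d x$, with explicit minimal values. Setting $\delta = N+1$, I would form the periodization
$$P^{\pm}_{n+1,N}(\theta) := -\tfrac{1}{(n+1)!} \sum_{k \in \Z} \bigl( F^{\pm}_{n,N+1}(\theta + k) - R^{\pm}_n(\theta + k) \bigr),$$
where $R^{\pm}_n$ is a polynomial subtracted to enforce absolute convergence. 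The Paley--Wiener theorem ensures $F^{\pm}_{n,N+1}$ is bandlimited to $[-(N+1),N+1]$, so Poisson summation produces a trigonometric polynomial of degree at most $N+1$ in $\theta$; a boundary-vanishing argument (the extremizers being constructed to vanish at the endpoints of the spectrum, or alternatively a limit $\delta \uparrow N+1$) reduces the degree to $N$, as required.

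Next I would verify the three stated properties. The pointwise inequalities $P^{-}_{n+1,N} \leq \mathcal{B}_{n+1} \leq P^{+}_{n+1,N}$ follow from $F^{-}_{n,N+1} \leq f_n \leq F^{+}_{n,N+1}$ applied term-by-term in the translate sum, combined with the classical Hurwitz-zeta-type identity that expresses $\mathcal{B}_{n+1}(\theta)$ itself as the regularized periodization $-\tfrac{1}{(n+1)!}\sum_k (f_n - R_n)(\theta+k)$. The integral values $m_{n+1}/(N+1)^{n+1}$ and $M_{n+1}/(N+1)^{n+1}$ are the zeroth Fourier coefficients of the periodization, which by Poisson summation equal the optimal $L^1$ error in the Beurling--Selberg problem for $f_n$ at bandwidth $N+1$; the exact normalization comes from identifying the global max and min of $B_{n+1}$ on $[0,1]$. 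The bound $|\widehat{P^{\pm}_{n+1,N}}(k)| \ll_n |k|^{-n-1}$ for $1 \leq |k| \leq N$ follows since these coefficients differ from the exact Fourier coefficients of $\mathcal{B}_{n+1}$, which have size $|k|^{-n-1}$ by \eqref{20250909_14:44}, only by the analogous decay of Littmann's explicit extremal functions.

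The main obstacle I expect is the regularization: the naive sum $\sum_k f_n(\theta+k)$ diverges for every $n \geq 0$, so one must carefully choose the polynomial tail $R^{\pm}_n$ to match the asymptotics of both $f_n$ and $F^{\pm}_{n,N+1}$ at $\pm\infty$, then justify that Poisson summation still applies to the regularized difference while tracking all constants precisely enough to recover the exact values $m_{n+1}$, $M_{n+1}$. Uniqueness of $P^{\pm}_{n+1,N}$ then transfers from the uniqueness of Littmann's solution: any competing degree-$N$ trigonometric majorant realizing the stated integral would have to coincide with $\mathcal{B}_{n+1}$ at the translates of the interpolation nodes dictated by Littmann's extremal scheme, and this over-determines the trigonometric polynomial, forcing equality with $P^{\pm}_{n+1,N}$.
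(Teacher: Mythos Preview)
Your periodization construction is precisely the content of \cite[Theorem~1]{Car1}, which the paper simply cites for the existence, uniqueness, and integral values of $P^{\pm}_{n+1,N}$. So your plan for those parts is correct but reproduces work the paper treats as a black box.

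The one piece the paper actually proves is the Fourier coefficient bound \eqref{20250909_14:34}, and here your route is both vaguer and harder than the paper's. You propose to bound $\widehat{P^{\pm}_{n+1,N}}(k) - \widehat{\mathcal{B}_{n+1}}(k)$ by appealing to ``the analogous decay of Littmann's explicit extremal functions,'' which would require controlling the Fourier transform of $F^{\pm}_{n,N+1} - f_n$ at the integer points $k$---not something that falls out of Littmann's formulas without additional effort. The paper instead uses a one-line positivity trick: set $D^{+}_{n+1,N} := P^{+}_{n+1,N} - \mathcal{B}_{n+1} \geq 0$; then every Fourier coefficient of $D^{+}_{n+1,N}$ is bounded in modulus by its zeroth coefficient, which equals $M_{n+1}/(N+1)^{n+1} \leq M_{n+1}/|k|^{n+1}$ for $1 \leq |k| \leq N$. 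Combining this with $|\widehat{\mathcal{B}_{n+1}}(k)| = (n+1)!/(2\pi|k|)^{n+1}$ via the triangle inequality gives \eqref{20250909_14:34} immediately, with no recourse to the explicit form of the extremizers. This argument is worth internalizing: it applies to any one-sided approximation with a known zeroth coefficient, not just the optimal one.
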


\begin{proof}[Proof of \eqref{20250909_14:34}] The estimate \eqref{20250909_14:34} is not explicitly mentioned in \cite[Theorem 1]{Car1} and we give a brief proof. We deal with the case of $P_{n+1,N}^{+}$, and the estimate for $P_{n+1,N}^{-}$ is analogous. Let $D_{n+1, N}^+(\theta) := P_{n+1,N}^{+}(\theta) - \mathcal{B}_{n+1}(\theta)$. Since $D_{n+1, N}^+(\theta) \geq 0$, we have 
\begin{align}\label{20250909_14:43}
\left| \widehat{D_{n+1,N}^+}(k) \right| \leq \widehat{D_{n+1,N}^{+}}(0) =  \widehat{P_{n+1,N}^{+}}(0)  = \frac{M_{n+1}}{(N+1)^{n+1}}.
\end{align} 
By the triangle inequality,
\begin{align*}
\left| \widehat{P_{n+1,N}^+}(k) \right| \leq \left| \widehat{D_{n+1,N}^+}(k) \right| + \left|\widehat{\mathcal{B}_{n+1}}(k)\right|,
\end{align*}
and estimate \eqref{20250909_14:34} plainly follows from \eqref{20250909_14:43} and \eqref{20250909_14:44}, since $\widehat{\mathcal{B}_{n+1}}(k) = - \frac{(n+1)!}{(2 \pi i k)^{n+1}}$.\end{proof}

The next lemma is classical; see, e.g., \cite{Mon}. We include a brief outline of the proof. If  $I = [\alpha, \beta]$ is an interval on $\R / \Z$ with length $\beta - \alpha \leq 1$, we denote by $\mathbf{1}_I$ be the normalized characteristic function of $I$ (i.e., with $\mathbf{1}_I(\alpha) = \mathbf{1}_I(\beta) = \tfrac12$ if $0 < \beta - \alpha < 1$). 

\begin{lemma} \label{lem_Mont_char_intervals}
Let $I = [\alpha, \beta]$ be an interval on $\R / \Z$ with length $\beta - \alpha \leq 1$, and let $N$ be a non-negative integer. Then there exist real-valued trigonometric polynomials $T_N^{\pm}(\theta) = \sum_{k = -N}^{N}  \widehat{T_N^{\pm}}(k) \, e(k\theta) $ such that $T_N^{-}(\theta) \leq \mathbf{1}_I(\theta) \leq T_N^{+}(\theta)$ for each $\theta \in \R / \Z$, and 
\begin{align}\label{20250911_14:21}
\int_{\R/ \Z} \big(T_N^{+}(\theta) - \mathbf{1}_I(\theta)\big) \, \d\theta = \int_{\R/ \Z} \big( \mathbf{1}_I(\theta) - T_N^{-}(\theta)\big) \, \d\theta = \frac{1}{N+1}.
\end{align}
Moreover, for each integer $k$ with $1 \leq |k| \leq N$, one has $\Big| \widehat{T_N^{\pm}}(k) \Big| \ll1$.
\end{lemma}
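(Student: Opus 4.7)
The plan is to obtain $T_N^{\pm}$ by periodizing the classical Beurling--Selberg majorants/minorants of $\mathbf{1}_I$ on the real line. I would begin by recalling Beurling's entire function $B:\R \to \R$ of exponential type $2\pi$ satisfying $B(x) \geq \sgn(x)$ with $\int_{\R}(B - \sgn)\,\d x = 1$, as presented in Vaaler's survey \cite{Vaa}. Setting $\delta = N+1$, I would introduce Selberg's construction
\begin{align*}
S_\delta^{+}(x) &:= \tfrac12\, B\bigl(\delta(x-\alpha)\bigr) + \tfrac12\, B\bigl(\delta(\beta - x)\bigr),\\
S_\delta^{-}(x) &:= -\tfrac12\, B\bigl(\delta(\alpha - x)\bigr) - \tfrac12\, B\bigl(\delta(x - \beta)\bigr),
\end{align*}
and verify directly from the definition that $S_\delta^{-} \leq \mathbf{1}_I \leq S_\delta^{+}$ on $\R$, that both are entire of exponential type $2\pi\delta$ and integrable, and that $\int_{\R}(S_\delta^{+} - \mathbf{1}_I)\,\d x = \int_{\R}(\mathbf{1}_I - S_\delta^{-})\,\d x = 1/\delta$.

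Next I would periodize. Since $B$ is a tempered distribution of exponential type $2\pi$, the Fourier transform $\widehat{S_{\delta}^{\pm}}$ is supported in $[-\delta, \delta]$. A key technical point, which follows from writing $\widehat{B}(\xi) = 1/(\pi i \xi) + \widehat{B - \sgn}(\xi)$ for $\xi \neq 0$ and using the continuity of $\widehat{B - \sgn}$ (since $B - \sgn \in L^1(\R)$) together with the vanishing of $\widehat{B}$ outside $[-1,1]$, is that $\widehat{B}$ extends continuously to $\pm 1$ with $\widehat{B}(\pm 1) = 0$, and hence $\widehat{S_{N+1}^{\pm}}\bigl(\pm(N+1)\bigr) = 0$. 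Defining
\begin{equation*}
T_N^{\pm}(\theta) := \sum_{m \in \Z} S_{N+1}^{\pm}(\theta + m),
\end{equation*}
Poisson summation gives $T_N^{\pm}(\theta) = \sum_{|k| \leq N} \widehat{S_{N+1}^{\pm}}(k)\, e(k\theta)$, a trigonometric polynomial of degree $\leq N$. The inequalities $T_N^{-} \leq \mathbf{1}_I \leq T_N^{+}$ on $\R/\Z$ and the mean-value identities \eqref{20250911_14:21} follow by integrating the pointwise statements over $\R$ (at $x \in \{\alpha, \beta\}$ the endpoint normalization $\mathbf{1}_I = \tfrac12$ is consistent with the corresponding values of the Beurling function).

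For the Fourier coefficient bound, the same decomposition $\widehat{B}(\xi) = 1/(\pi i \xi) + \widehat{B - \sgn}(\xi)$, together with $\|B - \sgn\|_1 = 1$, yields for $1 \leq |k| \leq N$
\begin{equation*}
\widehat{T_N^{\pm}}(k) \;=\; \widehat{S_{N+1}^{\pm}}(k) \;=\; \widehat{\mathbf{1}_I}(k) + O\!\left(\frac{1}{N+1}\right).
\end{equation*}
Since $|\widehat{\mathbf{1}_I}(k)| \leq 1/(\pi |k|)$, one concludes $|\widehat{T_N^{\pm}}(k)| \ll 1$, as required.

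The main technical input is the existence and fine Fourier-analytic properties of the Beurling extremal function $B$; modulo those (classical) facts, every other step is a direct verification. I expect the endpoint vanishing $\widehat{B}(\pm 1) = 0$ to be the one point requiring the most careful handling in the writeup, since it is precisely what permits the periodization to land in the space of trigonometric polynomials of degree $\leq N$ rather than $\leq N+1$.
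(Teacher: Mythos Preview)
Your proposal is correct and follows the classical Selberg construction (as in \cite{Vaa} or \cite{Mon}): build the extremal majorant/minorant of $\mathbf{1}_I$ on $\R$ from two copies of Beurling's function and then periodize. The paper takes a different but closely related route: it invokes the case $n=0$ of Lemma~\ref{Lem7}, using the identity $\mathbf{1}_I(\theta) = (\beta - \alpha) + \mathcal{B}_1(\alpha - \theta) + \mathcal{B}_1(\theta - \beta)$ and setting $T_N^{\pm}(\theta) = (\beta - \alpha) + P_{1,N}^{\pm}(\alpha - \theta) + P_{1,N}^{\pm}(\theta - \beta)$. The two constructions in fact produce the same polynomials $T_N^{\pm}$, since the extremal trigonometric approximations $P_{1,N}^{\pm}$ to the sawtooth are themselves periodizations of (dilates of) Beurling's majorant/minorant of $\sgn$. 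The paper's organization is more economical in context, needing only a one-line reduction to a lemma already on hand, whereas your argument is self-contained at the cost of rederiving the Fourier-analytic properties of $B$. One small remark on your writeup: the endpoint vanishing $\widehat{S_{N+1}^{\pm}}(\pm(N+1)) = 0$ follows more cleanly from the observation that $S_{N+1}^{\pm} \in L^1(\R)$ (so $\widehat{S_{N+1}^{\pm}}$ is continuous) together with Paley--Wiener, rather than by reasoning about the distributional $\widehat{B}$ directly.
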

\begin{proof} This is a consequence of the case $n=0$ of Lemma \ref{Lem7}. One observes that $\mathbf{1}_I(\theta) = (\beta - \alpha) + \mathcal{B}_1(\alpha - \theta) + \mathcal{B}_1(\theta - \beta)$ and then takes $T_N^{\pm}(\theta) = (\beta - \alpha) + P_{1,N}^{\pm}(\alpha - \theta) + P_{1,N}^{\pm}(\theta - \beta)$.
\end{proof}

\subsection{Zero counting function for $\mathcal{L}(u,\chi_{D})$} Let $I = [\alpha, \beta]$ be an interval on $\R / \Z$ with length $\beta - \alpha \leq 1$. Denote by $N(I, \chi_{D})$ the (normalized) number of zeros $q^{-1/2}e(\theta_j)$ of $\mathcal{L}(u,\chi_{D})$ with $\theta_j \in I$, that is
\begin{align*}
N(I, \chi_{D}) = \sum_{j=1}^{2g}\mathbf{1}_I(\theta_j).
\end{align*} 
Small variants of the next lemma have appeared in \cite{AGK, FR}. We provide a brief proof for the convenience of the reader.
\begin{lemma}\label{Lem9_zero}
Let $I = [\alpha, \beta]$ be an interval on $\R / \Z$ with length $\beta - \alpha \leq 1$. Then 
\begin{align*}
N(I, \chi_{D}) = 2g\cdot(\beta - \alpha) + S(\beta, \chi_{D}) - S(\alpha, \chi_{D}).
\end{align*}
\end{lemma}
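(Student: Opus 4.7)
The plan is to unpack both sides in terms of the Bernoulli sawtooth $\mathcal{B}_1$ and let the identities collapse. By Lemma \ref{Lem:Bernoulli_rep} with $n=0$, we already know that
\[
S(\theta, \chi_{D}) = -\sum_{j=1}^{2g} \mathcal{B}_{1}(\theta - \theta_j),
\]
so the difference on the right-hand side of the claimed formula becomes
\[
S(\beta,\chi_D) - S(\alpha,\chi_D) = \sum_{j=1}^{2g}\bigl(\mathcal{B}_1(\alpha - \theta_j) - \mathcal{B}_1(\beta - \theta_j)\bigr).
\]
On the other hand, the characteristic function identity used in the proof of Lemma \ref{lem_Mont_char_intervals},
\[
\mathbf{1}_I(\theta) = (\beta - \alpha) + \mathcal{B}_1(\alpha - \theta) + \mathcal{B}_1(\theta - \beta),
\]
gives, after summing over the zeros,
\[
N(I,\chi_D) = \sum_{j=1}^{2g} \mathbf{1}_I(\theta_j) = 2g(\beta - \alpha) + \sum_{j=1}^{2g}\bigl(\mathcal{B}_1(\alpha - \theta_j) + \mathcal{B}_1(\theta_j - \beta)\bigr).
\]

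First I would verify the identity for $\mathbf{1}_I$ directly from the definition of $\mathcal{B}_1$: away from the endpoints $\alpha,\beta$ it reduces to the standard fact that $\{\alpha-\theta\} + \{\theta-\beta\} = 1 - (\beta-\alpha)$ when $\theta\in(\alpha,\beta)$ and $= -(\beta-\alpha)$ otherwise, with the renormalized value $\mathcal{B}_1(0) = 0$ producing exactly the value $\tfrac12$ at the endpoints $\theta = \alpha,\beta$, matching the convention for $\mathbf{1}_I$. Then I would invoke the odd-symmetry relation $\mathcal{B}_1(-x) = -\mathcal{B}_1(x)$, valid for every $x \in \R/\Z$ under the chosen renormalization at integers, to rewrite $\mathcal{B}_1(\theta_j - \beta) = -\mathcal{B}_1(\beta - \theta_j)$. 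Subtracting $2g(\beta-\alpha)$ from $N(I,\chi_D)$ then yields exactly $S(\beta,\chi_D) - S(\alpha,\chi_D)$.

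The only subtle point, and what I view as the main thing to check, is the behavior when some $\theta_j$ coincides with $\alpha$ or $\beta$, because both $\mathbf{1}_I$ and $S(\cdot,\chi_D)$ are defined as half-sum limits at their discontinuities. Because $\mathcal{B}_1$ is renormalized to $0$ at integers, the contribution of such a $\theta_j$ to the right-hand side is simply $\pm\tfrac12$ (the jump of $\mathcal{B}_1$), matching the $\tfrac12$ convention for $\mathbf{1}_I$ at the endpoints and the averaged definition of $S(\theta,\chi_D)$ at a zero. A short case analysis, comparing left- and right-limits on each side, confirms that the formula remains valid and that no boundary correction is needed. The lemma then follows.
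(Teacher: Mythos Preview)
Your proof is correct and takes a genuinely different route from the paper. The paper establishes the lemma by a direct contour argument: it applies the argument principle to $\mathcal{L}(u,\chi_D)$ on a contour enclosing the zeros in the arc $I$, uses the functional equation \eqref{20250910_15:16} to relate the variation of the argument on the outer half of the contour to that on the inner half, and then reads off the identity from the definition \eqref{20250911_14:12} of $S$. Your approach instead leverages the Bernoulli representation of $S$ already obtained in Lemma~\ref{Lem:Bernoulli_rep} (which itself encapsulates the needed complex-analytic input) together with the sawtooth decomposition $\mathbf{1}_I(\theta) = (\beta-\alpha) + \mathcal{B}_1(\alpha-\theta) + \mathcal{B}_1(\theta-\beta)$ quoted from the proof of Lemma~\ref{lem_Mont_char_intervals}, reducing the lemma to the odd symmetry of $\mathcal{B}_1$. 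The gain of your route is economy: once Lemma~\ref{Lem:Bernoulli_rep} is in hand, no further contour work or appeal to the functional equation is needed, and the endpoint conventions line up automatically through the renormalization $\mathcal{B}_1(0)=0$. The paper's argument, by contrast, is logically independent of Lemma~\ref{Lem:Bernoulli_rep} and mirrors the classical Riemann--von~Mangoldt counting argument, which one might prefer if the zero-counting formula is to be kept prior to the sawtooth representation.
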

\begin{proof}
Assume first that $q^{-1/2}e(\alpha)$ and $q^{-1/2}e(\beta)$ are not zeros of $\mathcal{L}(u , \chi_D)$. Consider the positively oriented contour $\Gamma = \Gamma_1 \cup \Gamma_2$, consisting only of circular arcs and rays, where $\Gamma_1$ connects the points $q^{-1/2}e(\alpha)$, $q \,e(\alpha)$, $q \, e(\beta)$ and $q^{-1/2}e(\beta)$, and $\Gamma_2$ connects the points $q^{-1/2}e(\beta)$, $q^{-2}e(\beta)$, $q^{-2}e(\alpha)$ and $q^{-1/2}e(\alpha)$. By the argument principle note that 
\begin{align}\label{20250911_14:10}
N(I, \chi_{D}) = \frac{1}{2\pi} \Delta_{\Gamma} \arg\mathcal{L}(u,\chi_{D}).
\end{align}
By the functional equation \eqref{20250910_15:16} and the fact that $\overline{\mathcal{L}(\overline{u},\chi_{D})} = \mathcal{L}(u,\chi_{D})$, one has 
\begin{align}\label{20250911_14:11}
\frac{1}{2\pi} \Delta_{\Gamma_1} \arg\mathcal{L}(u,\chi_{D}) = 2g\cdot (\beta - \alpha) + \frac{1}{2\pi} \Delta_{\Gamma_2} \arg\mathcal{L}(u,\chi_{D}).
\end{align}
From \eqref{20250911_14:10}, \eqref{20250911_14:11}, and definition \eqref{20250911_14:12}, we arrive at 
\begin{align*}
N(I, \chi_{D})  = 2g\cdot (\beta - \alpha)  + \frac{1}{\pi} \Delta_{\Gamma_2} \arg\mathcal{L}(u,\chi_{D}) = 2g\cdot (\beta - \alpha) + S(\beta, \chi_{D}) - S(\alpha, \chi_{D}).
\end{align*}
If  $q^{-1/2}e(\alpha)$ and/or $q^{-1/2}e(\beta)$ are zeros of $\mathcal{L}(u , \chi_D)$, the result follows from the normalized way we defined $N(I, \chi_{D})$ and $S(\theta, \chi_{D})$.
\end{proof}

\section{Proofs of Theorems \ref{Thm1}, \ref{Thm2} and \ref{Thm3}}
We give a unified proof of Theorems \ref{Thm1}, \ref{Thm2} and \ref{Thm3}. In each situation we want to bound a certain periodic function $F (\cdot, \chi_D): \R/\Z \to \R$ that can be written as
\begin{equation*}
F(\theta, \chi_D) = \sum_{j=1}^{2g} G(\theta - \theta_j)\,,
\end{equation*}
with $G:\R/\Z \to \R$ being a suitable periodic function of mean value zero (these are Lemmas \ref{Lem4_modulus} and \ref{Lem:Bernoulli_rep}). Suppose we can solve the problem of majorizing $G$ by a trigonometric polynomial of a fixed degree, optimizing the $L^1(\R/\Z)$-error. In our framework (Lemmas \ref{Lem6_maj_varphi} and \ref{Lem7}), this means that, for each non-negative $N$, there exists a trigonometric polynomial $V_N(\theta) = \sum_{k=-N}^{N} \widehat{V_N}(k) \,e(k\theta)$ such that:
\begin{itemize}
\item[(i)] $G(\theta) \leq V_N(\theta)$ for all $\theta \in \R/\Z$. 
\item[(ii)] For some constants ${\bf C}$ and $n$ (independent of $N$), we have
$$\int_{\R/\Z} V_N(\theta) \,\d\theta = \frac{{\bf C}}{(N+1)^{n+1}}.$$
\item[(iii)] $\big|\widehat{V_N}(k)\big| \ll 1$ for integers $k$ with $1 \leq |k| \leq N$, with the implicit constant independent of $N$. 
\end{itemize}
Under such conditions, we have
\begin{align*}
F(\theta, \chi_D) & = \sum_{j=1}^{2g} G(\theta - \theta_j) \leq  \sum_{j=1}^{2g} V_N(\theta - \theta_j)  = \sum_{j=1}^{2g} \sum_{k=-N}^{N} \widehat{V_N}(k) \,e\big(k(\theta - \theta_j)\big)\\
& = 2g \,\widehat{V_N}(0) + \sum_{j=1}^{2g} \sum_{\substack{k=-N \\ k\neq0}}^{N} \widehat{V_N}(k) \,e\big(k(\theta - \theta_j)\big) \leq 2g \,\widehat{V_N}(0) + \sum_{\substack{k=-N \\ k\neq0}}^{N} \Big|\widehat{V_N}(k) \Big| \left| \sum_{j=1}^{2g} e(k\theta_j)\right|\\
& \leq 2g \,\widehat{V_N}(0) +  \sum_{\substack{k=-N \\ k\neq0}}^{N} \Big|\widehat{V_N}(k) \Big| \, q^{|k|/2} = \frac{2g\,{\bf C}}{(N+1)^{n+1}} + O\big(N q^{N/2}\big) \,,
\end{align*}
where we have used \eqref{20251910_14:09}. Choosing $N \sim 2\log_q d - (2n+6)\log_q \log_q d$, we get
\begin{align*}
F(\theta, \chi_D) \leq  \left( \frac{{\bf C}}{2^{n+1}} + o(1) \right) \frac{d}{(\log_q d)^{n+1}}.
\end{align*}
The term $o(1)$ above is $O(\log_q \log_q d \, / \log_q d )$ (the implicit constant might depend on $n$ here). 

\smallskip

The proof of the lower bound is analogous, using the trigonometric polynomials that minorize $G$. 

\section{An alternative proof of Theorem \ref{Thm2}}
This proof uses the zero counting function and trigonometric polynomials that majorize/minorize the characteristic function of an interval in the periodic setting (Lemma \ref{lem_Mont_char_intervals}). The main insight in this proof, to refine \eqref{20250911_14:20} by a factor of $2$, is the same insight that allows the proof of \eqref{20250903_14:29} for $S(t)$ given by Carneiro, Chandee and Milinovich in \cite{CCM2} to refine a previous estimate of Goldston and Gonek \cite{GG} by a factor of $2$, namely, the use of a symmetric interval to capture the symmetry of the zeros allied to the fact that the error of the approximations in \eqref{20250911_14:21} depend only on the degree of the trigonometric polynomial but not on the size of the interval. 

\begin{proof}[Proof of Theorem \ref{Thm2}] Since $S(\theta, \chi_D) = - S(-\theta, \chi_D)$, we may assume without loss of generality that $0 \leq \theta \leq \frac12$. Let us do the proof of the upper bound with the majorant, as the proof of the lower bound with the minorant is analogous. We use Lemmas \ref{lem_Mont_char_intervals} and \ref{Lem9_zero} with the interval $I = [-\theta, \theta]$ to get
\begin{align*}
2\, S(\theta, \chi_D) & =  - 4g \theta + N(I, \chi_{D}) =  - 4g \theta+ \sum_{j=1}^{2g} \mathbf{1}_I(\theta_j) \leq - 4g \theta + \sum_{j=1}^{2g} T_N^{+}(\theta_j)\\
& = - 4g \theta + \sum_{j=1}^{2g} \sum_{k=-N}^{N} \widehat{T_N^+}(k) \,e(k\theta_j) =  \left(- 4g \theta + 2g \,\widehat{T_N^+}(0)\right)+  \sum_{j=1}^{2g} \sum_{\substack{k=-N \\ k \neq 0}}^{N} \widehat{T_N^+}(k) \,e(k\theta_j)\\
& = \frac{2g}{N+1} +  \sum_{j=1}^{2g} \sum_{\substack{k=-N \\ k \neq 0}}^{N} \widehat{T_N^+}(k) \,e(k\theta_j) \leq \frac{2g}{N+1}  + \sum_{\substack{k=-N \\ k \neq 0}}^{N}  \left|\widehat{T_N^+}(k) \right| \left| \sum_{j=1}^{2g} e(k\theta_j)\right|\\
& \leq  \frac{2g}{N+1}  + \sum_{\substack{k=-N \\ k \neq 0}}^{N}  \left|\widehat{T_N^+}(k) \right| q^{|k|/2} =  \frac{2g}{N+1} + O\big(N q^{N/2}\big)\,,
\end{align*}
where we have used \eqref{20251910_14:09}. Choosing $N \sim 2\log_q d - 6\log_q \log_q d$, we get
\begin{align*}
S(\theta, \chi_D) \leq \left(\frac{1}{4} + o(1) \right) \frac{d}{\log_q d}.
\end{align*}

\end{proof}

\section*{Acknowledgments}
The second author gratefully acknowledges the hospitality of ICTP, Trieste, where this work was initiated, the financial support of the Australian Research Council Grant DP230100534, and the Max Planck Institute for Mathematics, Bonn. The third author would like to thank the DST - Government of India for the support under the DST-INSPIRE Faculty Scheme with Faculty Reg. No. IFA21-MA 168.


\begin{thebibliography}{99}

\bibitem{AS} 
S. A. Altuğ and J. Tsimerman, 
\newblock Metaplectic Ramanujan conjecture over function fields with applications to quadratic forms,
\newblock Int. Math. Res. Not. IMRN 2014, no. 13, 3465--3558.

\bibitem{AGK}
J. C. Andrade, S. M. Gonek and J. P. Keating, 
\newblock Truncated product representations for $L$-functions in the hyperelliptic ensemble,
\newblock Mathematika 64 (2018), no. 1, 137--158.


\bibitem{BCDGL}
A. Bucur, E. Costa, C. David, J. Guerreiro and D. Lowry-Duda, 
\newblock Traces, high powers and one level density for families of curves over finite fields,
\newblock Math. Proc. Cambridge Philos. Soc. 165 (2018), no. 2, 225--248.

\bibitem{BF}
H. M. Bui and A. Florea, 
\newblock Zeros of quadratic Dirichlet $L$-functions in the hyperelliptic ensemble,
\newblock Trans. Amer. Math. Soc. 370 (2018), no. 11, 8013--8045.

\bibitem{BF2}
H. M. Bui and A. Florea, 
\newblock Hybrid Euler-Hadamard product for quadratic Dirichlet $L$-functions in function fields, 
\newblock Proc. London Math. Soc. (3) 117 (2018), no. 1, 65--99. 

\bibitem{Car1}
E. Carneiro,
\newblock Sharp approximations to the Bernoulli periodic functions by trigonometric polynomials,
\newblock J. Approx. Theory 154 (2008), no. 2, 90--104.

\bibitem{CCM}
E. Carneiro, V. Chandee and M. B. Milinovich, 
\newblock Bounding $S(t)$ and $S_1(t)$ on the Riemann hypothesis,
\newblock Math. Ann. 356 (2013), no. 3, 939--968.

\bibitem{CCM2}
E. Carneiro, V. Chandee and M. B. Milinovich, 
\newblock A note on the zeros of zeta and $L$-functions, 
\newblock Math. Z. 281 (2015), 315--332.

\bibitem{CC}
E. Carneiro and A. Chirre, 
\newblock Bounding $S_n(t)$ on the Riemann hypothesis,
\newblock Math. Proc. Cambridge Philos. Soc. 164 (2018), no. 2, 259--283.

\bibitem{CL}
E. Carneiro and F. Littmann,
\newblock Bandlimited approximations to the truncated Gaussian and applications,
\newblock Constr. Approx. 38 (2013), no. 1, 19--57.


\bibitem{CLV}
E. Carneiro, F. Littmann and J. D. Vaaler, 
\newblock Gaussian subordination for the Beurling-Selberg extremal problem,
\newblock Trans. Amer. Math. Soc. 365 (2013), no. 7, 3493--3534.

\bibitem{CV}
E. Carneiro and J. D. Vaaler,
\newblock Some extremal functions in Fourier analysis II,
\newblock Trans. Amer. Math. Soc. 362 (2010), no. 11, 5803--5843.

\bibitem{CS} 
V. Chandee and K. Soundararajan,
\newblock Bounding $|\zeta(\frac{1}{2}+it)|$ on the Riemann hypothesis, 
\newblock Bull. London Math. Soc. 43 (2011), no. 2, 243--250.

\bibitem{FR}
D. Faifman and Z. Rudnick, 
\newblock Statistics of the zeros of zeta functions in families of hyperelliptic curves over a finite field,
\newblock Compos. Math. 146 (2010), no. 1, 81--101.

\bibitem{Flo1}
A. Florea,
\newblock The fourth moment of quadratic Dirichlet $L$-functions over function fields,
\newblock Geom. Funct. Anal. 27 (2017), no. 3, 541--595.

\bibitem{F1}
A. Fujii, 
\newblock On the zeros of the Riemann zeta function. 
\newblock Comment. Math. Univ. St. Pauli 51 (2002), no. 1, 1--17.

\bibitem{GG}
D. A. Goldston and S. M. Gonek, 
\newblock A note on $S(t)$ and the zeros of the Riemann zeta-function,
\newblock Bull. Lond. Math. Soc. 39 , no. 3, 482-486 (2007).


\bibitem{Leh} 
D. H. Lehmer,
\newblock On the maxima and minima of Bernoulli polynomials,
\newblock Amer. Math. Monthly, 47, (1940), 533--538.

\bibitem{L}
J. E. Littlewood,
\newblock On the zeros of the Riemann zeta-function,
\newblock Proc. Camb. Philos. Soc. 22 (1924), 295--318.

\bibitem{Lit}
F.~Littmann,
\newblock Entire majorants via Euler-Maclaurin summation,
\newblock Trans. Amer. Math. Soc. 358, (2006), no. 7, 2821--2836.

\bibitem{Mon}
H. L. Montgomery, 
\newblock {\it Ten Lectures on the Interface between Analytic Number Theory and Harmonic Analysis}, 
\newblock American Mathematical Society, Providence, RI, 1994. 

 \bibitem{Ros}
 M. Rosen, 
 \newblock {\it Number theory in function fields}, 
 \newblock Graduate Texts in Mathematics, 210 (2002), Springer-Verlag, New York.

\bibitem{R1}
Z. Rudnick,
\newblock Traces of high powers of the Frobenius class in the hyperelliptic ensemble,
\newblock Acta Arith. 143 (2010), no. 1, 81--99.

\bibitem{Vaa}
J. D. Vaaler,
\newblock Some extremal functions in Fourier analysis,
\newblock Bull. Amer. Math. Soc. (N.S.) 12 (1985), no. 2, 183--216.

\bibitem{WEIL}  
A. Weil, 
 \newblock Sur les courbes alg\'{e}briques et les vari\'{e}t\'{e}s qui s'en déduisent, 
 \newblock Actualit\'{e}s Sci. Ind. 1041 (1948), Hermann et Cie., Paris.


\end{thebibliography}
\end{document}